\newcommand{\specialref}[1]{%
    \IfStrEq{#1}{V1}{V1}{\ref{#1}}%
}
\def\R{\mathbb{R}}
\def\N{\mathbb{N}}
\def\Z{\mathbb{Z}}
\def\Q{\mathbb{Q}}
\newcommand{\E}[1]{\mathbb{E}\left[{#1}\right]}
\def\P{\mathbb{P}}
\def\X{\mathbb{X}}
\def\A{\mathcal{A}}
\def\L{\mathbb{L}}
\def\MM{\mathbb{M}}
\def\XX{\mathcal{X}}
\def\AA{\mathbb{A}}
\newcommand{\1}{\mathds{1}}
\newcommand{\0}{\mathbf{0}}
\newcommand{\NDX}{\mathds{N}(\X)}
\renewcommand{\d}[1]{ \mathrm{d}{#1} }
\renewcommand{\|}{\mid}
\DeclareMathOperator{\diam}{diam}
\newtheorem{Th}{Theorem}[section]
\newtheorem{Prop}[Th]{Proposition}
\newtheorem{Lem}[Th]{Lemma}
\newtheorem{Def}[Th]{Definition}
\newtheoremstyle{normalstyle}
  {}
  {}
  {\normalfont}
  {}
  {\bfseries}
  {.}
  { }
  {}
\theoremstyle{normalstyle}
\newtheorem{Bsp}[Th]{Example}
\newtheorem{Alg}[Th]{Algorithm}
\title{Percolation in the marked stationary Random Connection Model for higher-dimensional simplicial complexes}
\author{ Dominik~Pabst \\
    Institute of Stochastics\\
    Karlsruhe Institute of Technology\\
    Institute of Theoretical Physics\\
    Friedrich Alexander University Erlangen-Nuremberg
}
\begin{document}

\maketitle

\begin{abstract}
   We introduce a novel percolation model that generalizes the classical Random Connection Model (RCM) to a random simplicial complex, allowing for a more refined understanding of connectivity and emergence of large-scale structures in random topological spaces.
   Regarding percolation with respect to the notion of up-connectivity, we establish the existence of a sharp phase transition for the appearance of a giant component, akin to the well-known threshold behavior in random graphs.
   This sharp phase transition is, in its generality, new even for the classical RCM as a random graph.
   As special cases, we obtain sharp phase transitions for the Vietoris-Rips complex, the \v Cech complex, and the Boolean model, allowing us to identify which properties of these well-known percolation models are actually required.
\end{abstract}

\begin{small}
\keywords{Random Connection Model \and Percolation \and Simplicial complexes \and Sharp phase transition}
\end{small}

\begin{small}
\mscs{60K35 \and 60D05 \and 60G55 \and 05C80}
\end{small}

\section{Introduction}

Motivated by physical problems, percolation has become a topic of significant mathematical interest, with many open questions remaining. 
Typical questions in percolation theory concern the existence and uniqueness of infinite components in random graphs. 
In the classical discrete percolation model, one considers the random subgraph of the lattice $\mathbb{Z}^d$ for $d \geq 2$, where each edge is retained independently with some fixed probability $p \in [0,1]$. 
It is well-known that there exists a critical probability $p_c$ such that there is almost surely no infinite component for $p < p_c$, while there is almost surely exactly one unique infinite component for $p > p_c$. 
The behavior of percolation models at or near their critical threshold is a central theme in percolation theory.

In this paper, we consider a continuous model, known as the \textit{Random Connection Model} (RCM), which has been investigated with respect to a wide variety of mathematical questions (see, for example, \cite{Can,Last.RCM}), particularly in the context of deriving qualitative and quantitative central limit theorems for fundamental graph characteristics. 
Originally, however, it was introduced as a percolation model in \cite{Penrose.RCM}. 
In this context, the vertices of the random graph are the points of a Poisson process, say on $\mathbb{R}^d$ for the moment, with intensity measure $\beta\lambda_d$ for some $\beta > 0$. 
For each pair of vertices $x,y$, an edge between them is independently included in the graph with probability $\varphi(x,y)$, where $\varphi:\mathbb{R}^d \times \mathbb{R}^d \rightarrow [0,1]$ is a measurable, symmetric, and translation-invariant function. 
It was proven in \cite{Penrose.RCM} that under a weak condition there exists a critical intensity $0 < \beta_c < \infty$, such that there is almost surely an infinite component for $\beta > \beta_c$ and almost surely no infinite component for $\beta < \beta_c$. 
One method for analyzing the phase transition at the critical point is to introduce alternative definitions of critical intensity and compare them. 
A particularly useful technique involves adding a point at the origin to the Poisson process (due to translation invariance, the specific location of this additional point is irrelevant) and incorporating it into the graph. 
This allows us to define a new critical intensity as the supremum of all $\beta \in [0,\infty)$ for which the expected cluster size of the added point remains finite. 
Theorem 6.2 in \cite{Meester} establishes that this critical intensity, denoted by $\beta_T$, coincides with the previously introduced critical intensity. 
However, this equality does not necessarily hold in the more general framework introduced below.

In this work, we consider the RCM, where the points in $\mathbb{R}^d$ are endowed with an additional component, which can be thought of as additional information, and extend this model to a random simplicial complex. 
This model has already been introduced and explored in \cite{Pabst.Betti,Pabst.Euler}, where central limit theorems for typical quantities of simplicial complexes were established. 
In \cite{Pabst.Euler}, the model was examined in an even more general framework, allowing vertices to be drawn from an arbitrary Borel space.
In that work, central limit theorems for the Euler characteristic were established across various asymptotic scenarios with rates of convergence.
In contrast, \cite{Pabst.Betti} presents central limit theorems for a class of functionals, including the Betti numbers, within the stationary marked case, which will also be examined in this study.
For this purpose, let $\Phi$ be a Poisson process on $\mathbb{R}^d \times \AA$, where $(\AA,\mathcal{T})$ is an arbitrary Borel space, called the mark space, equipped with a probability measure $\Theta$, with intensity measure $\beta\lambda_d \otimes \Theta$. 
We fix $\alpha \in \mathbb{N}$ and for each $j \in \{1,\dots,\alpha\}$ define a measurable, symmetric, and translation-invariant function $\varphi_j:(\mathbb{R}^d \times \AA)^{j+1} \rightarrow [0,1]$, where the latter means that the value of the function does not change when the Euclidean components of all points are translated by the same vector (see \eqref{translation_invariant}). 
We call the functions $\varphi_1,\dots,\varphi_\alpha$ the connection functions of the model.

A simplicial complex is a family of nonempty, finite subsets of a vertex set, which is closed under taking subsets. 
Elements of a simplicial complex are called simplices and represent interactions or connections between the vertices. 
A simplex containing $j + 1$ vertices is called a $j$-simplex with $j$ as its dimension. 
The dimension of a simplicial complex is the supremum over the dimensions of its simplices, so a graph can be viewed as a (at most) one-dimensional simplicial complex. 
We define a random simplicial complex $\Delta$ in several steps:
\begin{itemize}
\item[(0)] The vertex set of $\Delta$ is the set of points of $\Phi$.
\item[(1)] For each pair of points $x,y \in \Phi$, add the edge $\{x,y\}$ independently with probability $\varphi_1(x,y)$ to the complex $\Delta$.
\item[(2)] For each triple $x,y,z \in \Phi$, whose edges have all been added to the complex, add the triangle $\{x,y,z\}$ independently with probability $\varphi_2(x,y,z)$ to the complex $\Delta$.
\item[$\vdots$]
\item[($\alpha$)] For $\alpha + 1$ points $x_{i_0},\dots,x_{i_\alpha} \in \Phi$, whose subsimplices have all been added to the complex, add the simplex $\{x_{i_0},\dots,x_{i_\alpha}\}$ independently with probability $\varphi_\alpha(x_{i_0},\dots,x_{i_\alpha})$ to $\Delta$.     
\end{itemize}
In the case $\alpha = 1$, we have the RCM as a random graph that is studied in the literature. 
Like the RCM, this model can also be defined and investigated on even more general spaces. 
For example, in \cite{Chebunin}, the RCM is considered on complete separable metric spaces, and Theorem 6.1 there shows that under two conditions there is almost surely at most one infinite cluster. 
However, the methods we use in this paper crucially require properties of Euclidean space. Nevertheless, we do not need to restrict ourselves completely to Euclidean space, but instead can consider the previously introduced situation, which we refer to as the \textit{marked stationary} RCM. 
The marked stationary RCM (for a random graph) has been studied in \cite{Caicedo,Dickson}.
Note that we obtain the unmarked case (the RCM on $\mathbb{R}^d$) as a special case by choosing a single-point mark space or by connection functions that depend only on the Euclidean component. 
Another important case is given by choosing $\AA := \{ K \subset \mathbb{R}^d \mid \emptyset \neq K \text{ is convex and compact}\}$, with the connection functions
\begin{align} \label{connectionfct_BoolM}
    \varphi_j\big((x_0,K_0),\dots,(x_j,K_j)\big) \,=\, \1\Big\{ \bigcap_{i=0}^j (x_i + K_i) \neq \emptyset \Big\}, \qquad j\in\{1,\dots,\alpha\}.
\end{align}
This subcase is directly related to the Boolean model, one of the flagship models of stochastic geometry. 
See Section \ref{Sec:ExampleModels} for details. 
In the Boolean model, the equality $\beta_c = \beta_T$ is not generally true (see Chapter 3 in \cite{Meester}), but Proposition 2.2 in \cite{Dickson} shows that equality holds if the mark distribution $\Theta$ is concentrated on balls with radius not larger than a certain constant. 
All these results pertain to the percolation of graphs.

In recent years, \textit{simplicial complexes} have increasingly emerged as a generalization of graphs in connection with percolation problems. 
First, the question arises as to how the concept of percolation can be transferred to simplicial complexes. 
Certainly, one could speak of percolation in simplicial complexes just as in graphs if there exists an infinite connected component. 
However, this usage of the term would not be interesting, as a simplicial complex would percolate in this case if and only if its restriction to simplices of dimension at most 1 percolates.
This means that simplices with dimensions greater than 1 would not play any role, and therefore, effectively, one would only consider a graph again. 
Thus, we first need to clarify what we mean by percolation in a simplicial complex. 
In the literature, there are several concepts of percolation for simplicial complexes. 
For example, in \cite{Hirsch}, face percolation, cycle percolation, and *-percolation are considered for the Vietoris-Rips complex. 
Face percolation is simply another term for percolation with respect to the so-called down connectivity, which, along with up connectivity, is addressed in \cite{Iyer} for the Vietoris-Rips complex and the \v Cech complex. 
\begin{figure}[t!]
    \begin{minipage}{.5\linewidth}
    \centering
        \def\svgwidth{1.0\textwidth}
        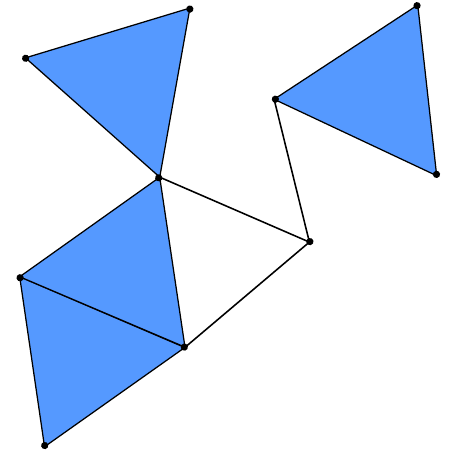
    \end{minipage}
    \begin{minipage}{.5\linewidth}
    \centering
        \def\svgwidth{1.0\textwidth}
        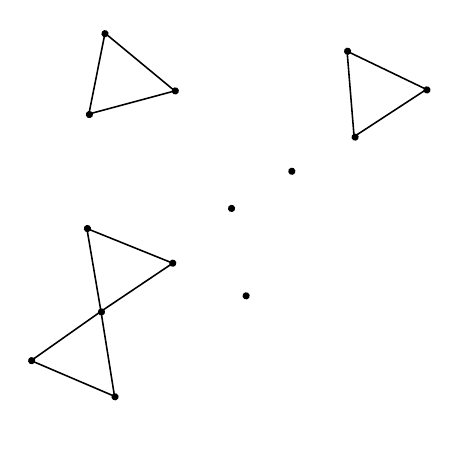
    \end{minipage}
    \caption{A two-dimensional simplicial complex $K$ (left) and its 1-graph $G_1(K)$ (right)}
    \label{fig:UpGraph}
\end{figure}
In this paper, we focus on percolation with respect to up connectivity, while also demonstrating that our results hold for percolation regarding down connectivity.
The idea behind these two concepts of connectivity is to construct a graph from a simplicial complex and then to speak of the percolation of the simplicial complex when the constructed graph percolates in the classical sense, meaning it has an infinite connected component. 
To do this, we fix some $q \in \mathbb{N}_0$ and define the vertex set of the $q$-graph $G_q(K)$ of a simplicial complex $K$ as the set of $q$-simplices of $K$. 
There is an edge between two $q$-simplices $\sigma,\rho$ of $K$ if $K$ contains a $(q+1)$-simplex $\pi$ with $\sigma,\rho \subset \pi$. 
Evidently, the $q$-graph of a simplicial complex $K$ only depends on its restriction to simplices of dimension at most $q+1$, which is referred to as the $(q+1)$-skeleton of $K$ and is again a simplicial complex. 
For $q=0$, the $q$-graph of $K$ is simply its 1-skeleton, disregarding the fact that $0$-simplices are not vertices, but rather one-element subsets of the vertex set. 
Therefore, the $0$-graph of a simplicial complex is connected if and only if the simplicial complex itself is connected in the classical sense. 
In our situation, only the case $q < \alpha$ is of interest, since for $q > \alpha$ we would obtain the empty graph and for $q = \alpha$ we get a graph without edges. 
Figure \ref{fig:UpGraph} illustrates the 1-graph of a two-dimensional simplicial complex $K$. 
Here, one can immediately see that the $q$-graph of a connected simplicial complex does not necessarily have to be connected. 
The reverse implication is also not true. 
Figure \ref{fig:UpGraph} further demonstrates how every $(q+1)$-simplex in $K$ induces a complete subgraph on $q+2$ vertices in $G_q(K)$. 

The main goal of this work is to prove a \textit{sharp phase transition} for the percolation function defined later, by which we mean a result similar to Theorem 1 in \cite{Hirsch} or Theorem 8.2 in \cite{Last.OSSS}. 
The sharp phase transition in \cite{Hirsch} considers percolation with respect to down connectivity of the Vietoris-Rips complex, while the result in \cite{Last.OSSS} refers to percolation of the Boolean model in a sense that is equivalent to percolation with respect to up connectivity in the model introduced with the connection functions from \eqref{connectionfct_BoolM}. 
We obtain both the Vietoris-Rips complex and the \v Cech complex as special cases of the unmarked case by choosing suitable connection functions (see Section \ref{Sec:ExampleModels}). 
More precisely, this gives us only the $\alpha$-skeletons of the two complexes. 
However, since the $q$-graph of a simplicial complex depends only on its $(q+1)$-skeleton, this does not impose any restriction.
The main tools for proving the sharp phase transition are the discrete OSSS inequality and the Margulis-Russo formula for Poisson processes.
These two results play a crucial role in numerous proofs regarding the properties of percolation models (see, for example, \cite{Duminil-Copin,Hirsch}).
Throughout this paper, we will identify suitable conditions for the connection functions, specifically the properties \ref{V1} and \ref{V2}, that facilitate the application of the proof methods.
Heuristically speaking, \ref{V1} states that vertices far apart are not connected by $(q+1)$-simplices.
On the other hand, \ref{V2} asserts that closely situated vertices form a $(q+1)$-simplex with at least a given probability, provided their marks belong to a specific set.
In \cite{Pabst.Thesis}, additional percolation formulas for graphs are extended to simplicial complexes, specifically the FKG inequality and the BK inequality.
In this paper, results and methods derived from the PhD thesis \cite{Pabst.Thesis} are presented.

In Section \ref{Sec:Definition_Model}, we offer an exact definition of the model.
The specific construction of the simplicial complex $\Delta$ introduced there is essential for employing the methods to derive a sharp phase transition.
Before we begin deriving the sharp phase transition, we will investigate the critical intensities for percolation in the model in Section \ref{Sec:CriticalInt}.
The main result of this paper is Theorem \ref{Th:scharferPhasenübergang}, which is established in Section \ref{Sec:SharpPhaseTransition}.
In conclusion, Section \ref{Sec:ExampleModels} presents several example models that satisfy the requirements of Theorem \ref{Th:scharferPhasenübergang}.

\section{Preliminaries} \label{Sec:Pre}

In this section we give precise definitions of the objects from the introduction and state some important properties and notions connected to those objects, which are important for this paper.
Throughout this work $(\AA,\mathcal{T})$ will always be a Borel space, that is a measurable space such that a bijective measurable map $\phi:\AA\rightarrow U$ into a measurable subset $U$ of $[0,1]$ exists with measurable inverse.
Additionally we fix some probability measure $\Theta$ on $\AA$.

\subsection{Poisson processes}

Throughout this paper, we will only consider Poisson processes on the product of $\R^d$ with some Borel space, which is again a Borel space.
Therefore, we introduce Poisson processes in the context of Borel spaces, although the theory of Poisson processes allows for a much more general setting.
For a detailed introduction to the theory, we refer to \cite{Last.RCM}.
Let $(\X,\XX)$ be a Borel space and $\NDX$ be the set of all measures on $\X$ that can be expressed as a countable sum of counting measures (measures with values in $\N_0$).
We equip $\NDX$ with the smallest $\sigma$-field such that the mappings $\NDX\rightarrow\R$, $\eta\mapsto\eta(B)$ are measurable for all measurable sets $B\subseteq\X$.
A point process on $\X$ is a random element in $\NDX$, and a Poisson process $\xi$ with intensity measure $\lambda$ is a point process satisfying the following two properties:
\begin{enumerate}
    \item[(a)] For every measurable set $B\subseteq\X$, the random variable $\xi(B)$ follows a Poisson distribution with parameter $\lambda(B)$.
    \item[(b)] For pairwise disjoint measurable sets $B_1,\dots,B_m\subseteq\X$, $m\in\N$, the random variables $\xi(B_1),\dots,\xi(B_m)$ are stochastically independent.
\end{enumerate}
Now let $\xi$ be a Poisson process on $\R^d$ with intensity measure $\beta\lambda_d$ for some $\beta>0$, where $\lambda_d$ denotes the $d$-dimensional Lebesgue measure on $\R^d$.
As a consequence of Corollary 6.5 in \cite{Last.Lectures}, we can find random elements $X_1,X_2,\dots$ in $\R^d$, such that
\begin{align*}
    \xi \,=\, \sum_{i=1}^\infty \delta_{X_i}
\end{align*}
holds almost surely.
As a consequence of Proposition 6.9 in \cite{Last.Lectures}, the points $X_1,X_2,\dots$ are almost surely pairwise distinct.
Now let $A_1,A_2,\dots$ be i.i.d. random elements in $\AA$ with distribution $\Q$.
Then the point process process
\begin{align*}
    \Phi \,=\, \sum_{i=1}^\infty \delta_{(X_i,A_i)}
\end{align*}
on $\R^d\times\AA$ is called an independent $\Q$-marking of $\xi$.
Theorem 5.6 in \cite{Last.Lectures} shows that $\Phi$ is a Poisson process on $\R^d\times\AA$ with intensity measure $\beta\lambda_d\otimes\Theta$.
Since the distribution of a Poisson process only depends on its intensity measure (see Proposition 3.2 in \cite{Last.Lectures}), every Poisson process on $\R^d\times\AA$ with intensity measure $\beta\lambda_d\otimes\Theta$ has the same distribution as $\Phi$.

\subsection{Percolation of simplicial complexes}

Given an at most countable vertex set $V$, we call a family $K$ of nonempty, finite subsets of $V$ that is closed under taking subsets a simplicial complex with vertex set $V$.
For convenience, we always assume that all one-element subsets of $V$ are contained in $K$, so that the vertex set of a simplicial complex is uniquely determined.
An element $\sigma\in K$ containing $j+1$ vertices is called a $j$-simplex with $\dim(\sigma)=j$.
A simplex consisting of the vertices $v_0,\dots,v_j$ is denoted by $[v_0,\dots,v_j]$.
The dimension of $K$ is $\dim(K):=\sup\{ \dim(\sigma)\mid\sigma\in K\}$, which can be infinite; however, in this work, we only consider finite-dimensional simplicial complexes.
The restriction of a simplicial complex $K$ to simplices of dimension not bigger than $j\in\N_0$ is denoted by
\begin{align*}
   S_j(K) \,:=\, \{ \sigma\in K \,\|\, \dim(\sigma)\leq j \}
\end{align*}
and is referred to as the $j$-skeleton of $K$.
To study percolation of simplicial complexes, we utilize the notion of up connectivity, as discussed in \cite{Iyer} alongside down connectivity.
In both cases, a graph is constructed from a simplicial complex, which can then be examined for connectivity in the classical sense.
Section 1.1 of \cite{Iyer} discusses applications of both concepts.
In this paper, we focus on percolation with respect to up connectivity and define the $q$-graph of a simplicial complex.
For a simplicial complex $K$ and $q\in\N_0$, we define the $q$-graph $G_q(K)$ of $K$ on the vertex set $F_q(K):=\{\sigma\in K\mid \dim(\sigma)=q\}$ by the rule
\begin{align} \label{Rule_UpGraph}
    [\sigma,\rho]\in G_q(K) \enskip :\Longleftrightarrow\enskip \exists\pi\in F_{q+1}(K) \text{ with } \sigma,\rho\subset\pi
\end{align}
for $\sigma,\rho\in F_q(K)$.
Observe that two $q$-simplices $\sigma,\rho$ of $K$ can only be connected in $G_q(K)$ by an edge if $|\sigma \cap \rho| = q$.
Moreover, if $[\sigma,\rho] \in G_q(K)$, the relevant simplex $\pi$ from (\ref{Rule_UpGraph}) is given by $\sigma \cup \rho$. We say that $K$ $q$-percolates if $G_q(K)$ has an infinite component.
It is easy to show that if a simplicial complex $q$-percolates, it also $r$-percolates for all $r<q$.
As already hinted, this notion of percolation is closely related to percolation with respect to down connectivity.
To explain the duality between these two notions, we define the graph $\tilde{G}_{q+1}(K)$ over the vertex set $F_{q+1}(K)$ by
\begin{align*}
    [\pi,\tau]\in \tilde{G}_{q+1}(K) \enskip :\Longleftrightarrow\enskip \exists\sigma\in F_{q}(K) \text{ with } \sigma\subset\pi,\tau
\end{align*}
for $\pi,\tau\in F_{q+1}(K)$.
Here, a necessary and sufficient condition for $[\pi,\tau]\in \tilde{G}_{q+1}(K)$ is $|\pi\cap\tau|=q+1$, because we can always choose $\sigma=\pi\cap\tau$.
The duality of those two concepts arises from the fact that for a locally finite simplicial complex $K$ (every vertex is only part of finitely many simplices), there is an infinite component in $\tilde{G}_{q+1}(K)$ if and only if there is an infinite component in $G_q(K)$.
Keeping that in mind, we will focus on percolation with respect to $G_q(K)$ in this paper.

\section{Definition of the model} \label{Sec:Definition_Model}

To define the random simplicial complex $\Delta$ we fix some $\alpha\in\N$, which will be the maximal dimension of $\Delta$, and for each $j\in\{1,\dots,\alpha\}$ a measurable, symmetric and translation-invariant function $\varphi_j:(\R^d\times\AA)^{j+1}\rightarrow [0,1]$.
Here by translation-invariant we mean that
\begin{align} \label{translation_invariant}
    \varphi_j\big((x_0+t,a_0),\dots,(x_j+t,a_j)\big) \,&=\, \varphi_j\big((x_0,a_0),\dots,(x_j,a_j)\big)
\end{align}
holds for all $x_0,\dots,x_j,t\in\R^d$ and $a_0,\dots,a_j\in\AA$.
To decide which simplices are contained in $\Delta$, we will consider an even bigger space than $\R^d\times\AA$.
For $j\in\N$ let
\begin{align*}
M^{(j)} \,:=\, [0,1]^{\N_0^j} \,=\, \big\{ (a_{z})_{z\in\N_0^j} \,\|\, a_{z}\in [0,1] \text{ for all } z\in\N_0^j \big\}
\end{align*}
be the space of $j$-times indexed sequences with members in $[0,1]$ equipped with the product $\sigma$-field of the Borel $\sigma$-fields on $[0,1]$.
We consider the space $\MM := M^{(2)} \times \dots \times M^{(2\alpha)}$ together with the product $\sigma$-field and the probability distribution $\Q:=\otimes_{j=1}^\alpha\otimes_{z\in\N^{2j}}\mathcal{U}([0,1])$, where $\mathcal{U}([0,1])$ is the uniform distribution on $[0,1]$.
For an element $u\in\MM$, $j\in\{1,\dots,\alpha\}$ and $m_1,l_1,\dots,m_j,l_j\in\N_0$ we denote by $u_{m_1,l_1,\dots,m_j,l_j}$ the component of $u$ associated to $z=(m_1,l_1,\dots,m_j,l_j)\in\N_0^{2j}$.
Since a countable product of Borel spaces is again a Borel space, the situation when considering $\R^d\times\AA\times\MM$ has not become more general.
For the rest of the paper let $\Psi$ be a Poisson process on $\R^d\times\AA\times\MM$ with intensity measure $\beta\lambda_d\otimes\Theta\otimes\Q$ and $\Phi$ be its projection on $\R^d\times\AA$, which is a Poisson process with intensity measure $\beta\lambda_d\otimes\Theta$.
The points of $\Phi$ will be the vertices of $\Delta$ and the additional components in $\MM$ will be used to decide which simplices are part of $\Delta$.
As a result the whole randomness of the model is bundled in one Poisson process allowing us to make use of the powerful theory of Poisson processes. \\
To provide an explicit rule for constructing $\Delta$, we decompose the Euclidean space $\R^d$ into cubes of edge length $2t$ for some $t>0$.
The distribution of $\Delta$ is independent of the choice of $t$, which is why any arbitrary value can be used here in principle.
However, at a later stage, it makes sense to choose a specific value depending on the connection functions.
We fix an enumeration $z_0,z_1,\dots$ of $\Z^d$ with $z_0=\0$ (origin) and define a partition $\mathcal{Q}^t=(Q_i^t)_{i\in\N_0}$ of $\R^d$ by $Q_i^t:=[-t,t)^d+2t z_i$.
Given a realisation of $\Phi$ with $(x,a)\in\Phi$, we assign a tuple $(m,l)\in\N_0^2$ to $(x,a)$, which can be thought of the coordinates of the point $(x,a)$ inside of $\Phi$.
Here $m\in\N_0$ is the unique number with $x\in Q_m^t$ and $l$ is the number of points $(y,b)$ of $\Phi$ with $y\in Q_m^t$ and $y<x$, where $<$ is the lexicographical order on $\R^d$.
Accordingly, a point of $\Phi$ with coordinates $(m,l)$ is the $(l+1)$-smallest point (with respect to the lexicographical order of the Euclidean components) of $\Phi$ inside the cube $Q_m^t$.
In this way, the coordinates of all points of $\Phi$ are $\P$-almost surely uniquely determined.
For $j+1$ points $(x_0,a_0,u^{(0)}),\dots,(x_j,a_j,u^{(j)})\in\Psi$, $1\leq j\leq \alpha$, we consider the possible $j$-simplex $\sigma:=\{(x_0,a_0),\dots,(x_j,a_j)\}$ of $\Delta$ and define $u(\sigma):=u^{(j)}_{m_0,l_0,\dots,m_{j-1},l_{j-1}}$, where $(m_k,l_k)$ are the coordinates of $(x_k,a_k)$ inside $\Phi$.
Now we define the complex $\Delta$ by the decision rule
\begin{align*}
    \sigma\in\Delta \enskip :\Longleftrightarrow \enskip u(\rho)\leq \varphi_{|\rho|-1}(x_\rho) \enskip \text{for all } \rho\subseteq\sigma \text{ with } |\rho|\geq 2,
\end{align*}
where $\varphi_{|\rho|-1}(x_\rho)$ is the value of $\varphi_{|\rho|-1}$, when inserting the elements of $\rho$.
In this manner the simplicial complex $\Delta$ is defined for almost all realisations of $\Psi$.
A similar way of assigning marks to the edges in the RCM was used in \cite{Last.RCM}.
We denote the restriction of the simplicial complex $\Delta$ to simplices with vertices in $W\times\AA$ for some $W\subset \mathbb{R}^d$ by $\Delta_W$.
Similarly, we write $\Phi_W$ and $\Psi_W$ for the restrictions of $\Phi$ and $\Psi$ to $W\times\AA$ and $W\times\AA\times\MM$, respectively.
A crucial property of this construction for the methods employed in this work is the following.
Observing that the coordinates of a point of $\Phi$ in $Q_i^t\times\AA$ depend only on $\Phi_{Q_i^t}$, we conclude that for a union $W$ of cubes from the decomposition $\mathcal{Q}^t$ the simplicial complex $\Delta_W$ is determined by $\Psi_W$.
Finally, we would like to emphasize that it is possible to formally introduce a space of simplicial complexes.
However, since this is not necessary here, we will refrain from doing so.

\section{Critical Intensities} \label{Sec:CriticalInt}

In this section, we focus on critical intensities for the percolation of the random simplicial complex $\Delta$ defined in Section \ref{Sec:Definition_Model}.
In the context of percolation models, various definitions for critical intensities are frequently discussed and compared (see, for instance, Proposition 2.1 in \cite{Dickson} or Theorem 2 in \cite{Hirsch}), but we will adopt a classical definition in this work.
We assume $d \geq 2$ in this context to guarantee the finiteness of the critical intensities defined later.
For the remainder of this chapter, we use the following simplified notations.
For a simplex $\sigma$ with vertices in $\R^d \times \AA$ and $x \in \R^d$, we write $x \in \sigma$ if there exists an $a \in \AA$ with $(x, a) \in \sigma$.
We define the diameter $\diam(\sigma)$ of a simplex $\sigma = [(x_0, a_0), \dots, (x_s, a_s)]$ as $\diam(\sigma) := \diam(\{x_0, \dots, x_s\})$, where $\diam(B)$ refers to the diameter of the set $B \subseteq \R^d$ according to the Euclidean metric.
Finally, let $V \sim \Theta$ and $U \sim \Q$ always be independent marks (in particular, independent of $\Psi$).
To define the critical intensities, we incorporate the dependence of the probability measure on the intensity $\beta$ into the notation and denote it by $\P_\beta$.

\begin{Def}
Fix $q \in\{0, \dots, \alpha-1\}$, and denote by $\Delta^\0$ the simplicial complex constructed from $\Psi+\delta_{(\0,V,U)}$ in exactly the same way as $\Delta$ is constructed from $\Psi$.
\begin{enumerate}
    \item Let $C_q^\infty$ denote the event that there exists a $q$-simplex $\sigma$ in $\Delta^\0$ with $\0 \in \sigma$ and an infinite connected component in $G_q(\Delta^\0)$.  
    \item Define  
    \begin{align*}
        \beta_{\mathrm{c}}^{(q)} \,&:=\, \sup \big\{\beta\in [0,\infty) \,\|\, \P_\beta( C_q^\infty )=0 \big\}.
    \end{align*}
\end{enumerate}
\end{Def}

For the remainder of this paper, we consider connection functions that satisfy two properties, for the definition of which we introduce additional notation.
For $j \in\{1, \dots, \alpha\}$, we define the function $\kappa_j:(\R^d\times\AA)^{j+1}\rightarrow [0,1]$ by
\begin{align} \label{Def:kappa_j}
\kappa_j\big((x_0,a_0),\dots,(x_{j},a_j)\big) \,:=\, \prod_{\emptyset\neq I\subseteq \{0,\dots,j\}} \varphi_{|I|-1}(x_I),
\end{align}
where $\varphi_0\equiv 1$  and $\varphi_{|I|-1}(x_I)$ denotes the value of the function $\varphi_{|I|-1}$ when the points $(x_i,a_i)$, $i\in I$, are inserted.
The value of the function in \eqref{Def:kappa_j} is the probability that the simplex $[(x_0, a_0), \dots, (x_j, a_j)]$ is part of the complex $\Delta$, given that the points $(x_0, a_0), \dots, (x_j, a_j)$ are points of $\Phi$ (i.e., vertices of $\Delta$).
In the special cases $j=1$ and $j=2$, we obtain
\begin{align*}
\kappa_1 \,=\, \varphi_1,\qquad \kappa_2(x,y,z) \,=\, \varphi_2(x,y,z)\cdot\varphi_1(x,y)\cdot\varphi_1(x,z)\cdot\varphi_1(y,z)
\end{align*}
for $x,y,z\in\R^d\times\AA$.
When studying $q$-percolation of $\Delta$ for $q\in\{0, \dots, \alpha - 1\}$, we assume that the connection functions satisfy the following two properties.

\begin{enumerate}[label=(V\arabic*)]
    \item\label{V1} There exist $\delta, \varepsilon > 0$ and a measurable set $A \subseteq \AA$ with $\Theta(A) > 0$ such that
    \begin{align*}
        \kappa_{q+1}\big( (x_0,a_0),\dots,(x_{q+1},a_{q+1}) \big) \,\geq\, \varepsilon \qquad &\text{for all } a_0,\dots,a_{q+1} \in A \text{ and } x_0,\dots,x_{q+1} \in \R^d \\
        &\text{with } \diam\big(\{x_0,\dots,x_{q+1}\}\big) \leq \delta.
    \end{align*}
    \item\label{V2} There exists a $D > 0$ such that
    \begin{align*}
        \kappa_{q+1}\big( (x_0,a_0),\dots,(x_{q+1},a_{q+1}) \big) \,=\, 0 \hspace{19pt} &\text{for $\Theta^{q+2}$-almost all } (a_0,\dots,a_{q+1}) \in \AA^{q+2} \text{ and } \\
        &x_0,\dots,x_{q+1} \in \R^d \text{ with } \diam\big(\{x_0,\dots,x_{q+1}\}\big) > D.
    \end{align*}
\end{enumerate}

\noindent Condition \ref{V2} can be interpreted as a boundedness condition for the connection functions and ensures that $\P$-almost surely in $G_q(\Delta^\0)$, only simplices $\sigma, \rho \in F_q(\Delta^\0)$ with $\diam(\sigma \cup \rho) \leq D$ are connected by an edge.
Loosely speaking, \ref{V2} states that, $\P$-almost surely, points that are far away from each other cannot form a $(q+1)$-simplex in $\Delta$.  
On the other hand, \ref{V1} ensures that, given $\Phi$, points that are close to each other and have marks in $A$ will, with at least probability $\varepsilon > 0$, form a $(q+1)$-simplex in $\Delta$.
Moreover, \ref{V1} provides an irreducible submodel (by restricting to vertices with marks in $A$) in the sense of (5.1) in \cite{Chebunin}.  
Overall, we obtain the functional bounds
\begin{align*}
    \varepsilon\; \1\big\{ \diam\big(\{x_0,\dots,x_{q+1}\}\big)\leq\delta, a_0,\dots,a_{q+1}\in A \big\} \,&\leq\, \kappa_{q+1}\big( (x_0,a_0),\dots,(x_{q+1},a_{q+1}) \big) \\
    &\leq\, \1\big\{ \diam\big(\{x_0,\dots,x_{q+1}\}\big)\leq D \big\},
\end{align*}
which hold $(\lambda_d\otimes\Theta)^{q+2}$-almost everywhere.
It is evident that $D \geq \delta$ must hold.  
Property \ref{V1} is transferred by the definition of the functions $\kappa_1, \dots, \kappa_{q+1}$ to the functions $\kappa_1, \dots, \kappa_q$.  
Conversely, \ref{V2} is also satisfied for all $\kappa_j$ with $j > q+1$ (although these functions are irrelevant for $q$-percolation).
Additionally, we would like to point out that \ref{V1} implies
\begin{align} \label{Integrabilität:marked}
\beta\int_{\R^d} \int_\AA \int_\AA \varphi_1\big( (\0,a),(y,b) \big) \;\Theta (\d a) \; \Theta (\d b) \; \d y \,>\, 0.
\end{align}
The integral in \eqref{Integrabilität:marked} corresponds to the expected edge degree of the added point in $\Delta^\0$.
The finiteness of the integral is generally not given initially.  
However, if we replace the edge function by the edge function
\begin{align} \label{Modif:Kantenfkt}
\tilde{\varphi}_1\big( (x,a),(y,b) \big) \,:=\, \varphi_1\big( (x,a),(y,b) \big) \; \1\big\{ \Vert x-y\Vert \leq D \big\}
\end{align}
the probability that $G_q(\Delta)$ has an infinitely large connected component remains unchanged, since by \ref{V2}, simplices with diameter greater than $D$ almost surely form isolated vertices in $G_q(\Delta)$. 
Thus, if we are only interested in the percolation of $G_q(\Delta)$, we can always restrict to the subcomplex of $\Delta$ consisting of all simplices of $\Delta$ with diameter no greater than $D$.
For the edge function from (\ref{Modif:Kantenfkt}), which corresponds to this subcomplex, the integral from (\ref{Integrabilität:marked}) is finite.
The finiteness of the integral in \eqref{Integrabilität:marked} corresponds to condition (4.2) in \cite{Chebunin} and is much weaker than the integrability condition (D.1) from \cite{Caicedo}.

\begin{Th} \label{Th:kritischeIntensität}
Assume that the connection functions satisfy \ref{V1} for $q=\alpha-1$ and \ref{V2} for $q=0$.
Then
\begin{align*}
0\,<\,\beta_c^{(0)}\,\leq\, \beta_c^{(1)}\,\leq\,\dots\,\leq\, \beta_c^{(\alpha-1)} \,<\,\infty.
\end{align*}
\end{Th}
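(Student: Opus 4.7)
The theorem splits into three essentially independent claims: the monotonicity of the $\beta_c^{(q)}$ in $q$, the upper bound $\beta_c^{(\alpha-1)}<\infty$, and the lower bound $\beta_c^{(0)}>0$. My plan is to handle them one by one, with the large-$\beta$ upper bound being the substantial part.

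For the monotonicity $\beta_c^{(q)} \leq \beta_c^{(q+1)}$, I would show the inclusion $C_{q+1}^\infty \subseteq C_q^\infty$ up to $\P_\beta$-null sets. Each $(q+1)$-simplex $\tau\in\Delta^\0$ induces a complete subgraph on its $q+2$ many $q$-faces in $G_q(\Delta^\0)$, and any two $(q+1)$-simplices $\tau,\tau'$ adjacent in $G_{q+1}$ share a $q$-face $\sigma=\tau\cap\tau'$ which acts as a bridge. Traversing a $G_{q+1}$-path therefore yields a walk in $G_q$ on the $q$-faces visited, and by \ref{V2} combined with local finiteness of $\Psi$ any infinite $(q+1)$-component produces infinitely many distinct $q$-faces inside a single $G_q$-component. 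If $\0$ lies in some $(q+1)$-simplex $\tau$ of such an infinite $G_{q+1}$-component, then any $q$-face of $\tau$ through $\0$ lies in the corresponding infinite $G_q$-component, giving the inclusion and hence the ordering of critical intensities.

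The lower bound $\beta_c^{(0)}>0$ comes from a coupling with a subcritical Poisson Boolean model. By \ref{V2} for $q=0$, $\varphi_1((x,a),(y,b))=0$ whenever $\|x-y\|>D$ (for $\Theta^2$-a.e.\ marks), so almost surely every edge of $\Delta^\0$ joins two points within Euclidean distance $D$. Placing a closed ball of radius $D/2$ around each point of $\Phi+\delta_{(\0,V,U)}$, the $G_0$-cluster of $\0$ is contained in the Boolean cluster of $\0$. Since the Poisson Boolean model with balls of fixed radius in dimension $d\geq 2$ has a strictly positive critical intensity, for sufficiently small $\beta>0$ the Boolean cluster of $\0$, and \emph{a fortiori} the $G_0$-cluster of $\0$, is almost surely finite, yielding $\P_\beta(C_0^\infty)=0$.

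The upper bound $\beta_c^{(\alpha-1)}<\infty$ is the main difficulty, and I would attack it by a block-percolation argument driven by \ref{V1}. Partition $\R^d$ into cubes of side length $r$ chosen so that the union of any two face-adjacent cubes has diameter at most $\delta$. Call a cube \emph{good} if it contains at least $\alpha+1$ points of $\Phi$ with marks in the set $A$ from \ref{V1}; the probability of being good tends to $1$ as $\beta\to\infty$. For two face-adjacent good cubes, \ref{V1} guarantees that a designated $(\alpha+1)$-tuple of selected points straddling the two cubes forms an $\alpha$-simplex in $\Delta$ with probability at least $\varepsilon$, and these events are conditionally independent across an appropriate disjoint family of cube pairs. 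The resulting process on $\Z^d$ therefore dominates supercritical Bernoulli bond percolation via a Liggett-Schonmann-Stacey-type stochastic-domination argument, producing an infinite cluster of good cubes. By the bridging argument used for monotonicity, this cluster yields an infinite component of $G_{\alpha-1}(\Delta)$. Conditioning further on the positive-probability local events $\{V\in A\}$, the existence of $\alpha$ further points of $\Phi$ with marks in $A$ in a small ball around $\0$, and the associated $\alpha$-simplex being in $\Delta$, places $\0$ inside this infinite component and gives $\P_\beta(C_{\alpha-1}^\infty)>0$. The main obstacle is choosing the block event so that it has bounded-range dependencies (needed for stochastic domination) while remaining strong enough to ensure that infinite good-cube clusters translate into an infinite component of $G_{\alpha-1}(\Delta^\0)$ reaching the origin.
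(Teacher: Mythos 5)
Your monotonicity and lower-bound arguments line up with the paper. For $\beta_c^{(0)}>0$, the paper likewise dominates $\varphi_1$ by $\1\{\Vert x-y\Vert\leq D\}$ and reduces to a geometric graph/Boolean model with bounded radius (it cites \cite{Caicedo,Dickson} rather than invoking the Boolean threshold directly, but the idea is the same), and the chain $\beta_c^{(0)}\leq\dots\leq\beta_c^{(\alpha-1)}$ is exactly the observation that $q$-percolation implies $r$-percolation for $r<q$, which you spell out correctly via the complete-subgraph and shared-face bridging.

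For $\beta_c^{(\alpha-1)}<\infty$ you take a genuinely different route: a static good-cube definition followed by Liggett--Schonmann--Stacey stochastic domination, whereas the paper runs a sequential exploration. The paper first splits each cube's process into $2d$ independent subprocesses via an auxiliary $\mathcal{U}(\{1,\dots,2d\})$-marking, assigns one subprocess to each incident lattice bond, and reveals them one at a time so that every attempted bond has success probability at least some $p(\beta)\to 1$ \emph{uniformly over the entire past}; this gives a direct coupling with Bernoulli bond percolation without any $k$-dependence bookkeeping. Both strategies are in principle viable, but your sketch leaves a concrete gap that you yourself flag as the ``main obstacle'' and that is in fact the heart of the matter. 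A single $\alpha$-simplex straddling two adjacent good cubes $Q_i,Q_j$ does \emph{not} connect a designated $q$-simplex $\pi_i\subset Q_i$ to a designated $q$-simplex $\pi_j\subset Q_j$ in $G_q$: with only $\alpha+1=q+2$ vertices, such a straddling simplex can have a $q$-face lying entirely in at most one of the two cubes, and the straddling simplices for the pairs $(Q_i,Q_j)$ and $(Q_j,Q_k)$ need not share any face, so good-cube adjacency does not transfer to $G_q$-adjacency. What is actually needed, and what the paper encodes as condition~(\ref{NachbarwürfelEigenschaft}), is a \emph{chain} of $(q+1)$-simplices $[x_0,\dots,x_j,y_j,\dots,y_q]$ for $j=0,\dots,q$, swapping one vertex at a time, which carries $\pi_i=[y_0,\dots,y_q]$ over to $\pi_j=[x_0,\dots,x_q]$; by \ref{V1} and the independence of the $\MM$-marks this chain exists with probability at least $\Theta(A)^{q+1}\varepsilon^{q+1}$ whenever the $2(q+1)$ points lie within diameter $\delta$ and have marks in $A$. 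Your block event would have to demand this chain, and one must then check that it still has the finite-range dependence LSS requires (a point where the paper's $2d$-splitting and one-bond-at-a-time exploration is the cleaner tool). Without the chain the proposal does not close.
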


\begin{proof}
Note that the assumptions imply, that \ref{V1} and \ref{V2} hold for all $q\in\{0,\dots,\alpha-1\}$.
The positivity of $\beta_c^{(0)}$ is demonstrated in \cite{Caicedo} and \cite{Dickson} (where it is denoted by $\lambda_c$).  
More precisely, Lemma 2.2 in \cite{Caicedo} establishes the positivity of a differently defined critical intensity, denoted there by $\lambda_O$, under an integration condition, which follows directly from \ref{V2}.  
Note that although a Polish space is chosen as the mark space in \cite{Caicedo}, this is not necessary for this result.
For the exact definition of $\lambda_O$, we refer to \cite{Caicedo,Dickson}.  
Proposition 2.1 in \cite{Dickson} shows that $\lambda_c\geq\lambda_O$, which implies $\beta_c^{(0)}=\lambda_c>0$.  
Although \cite{Dickson} assumes a reflection-invariant edge function, we can circumvent this assumption without verifying the proof of Proposition 2.1 in \cite{Dickson} in detail.  
Due to \ref{V2}, we can bound the edge function $\varphi_1$ from above by the reflection-invariant function  
\begin{align*}
\hat{\varphi}_1\big( (x,a),(y,b) \big) \,:=\, \1\big\{ \Vert x-y\Vert\leq D \}
\end{align*}
and, using the preceding argument, conclude the positivity of the critical intensity with respect to $\hat{\varphi}_1$.  
Since $\varphi_1\leq\hat{\varphi}_1$, the critical intensity corresponding to $\varphi_1$ is at least as large as that corresponding to $\hat{\varphi}_1$ and is therefore itself positive.  
The estimate $\beta_c^{(q-1)}\leq \beta_c^{(q)}$ follows from the fact that $q$-percolation implies $r$-percolation for all $r<q$. \\
It remains to show that $\beta_c^{(q)}<\infty$ for $q = \alpha - 1$, whereby we prove the statement for arbitrary $q < \alpha$.  
To this end, we present an algorithm that reduces the situation to percolation on the lattice $\L^d$.
Here, $\L^d$ is the graph with vertex set $\Z^d$, where two vertices $z_1,z_2$ are joined by an edge if and only if $\Vert z_1-z_2\Vert = 1$.
We will show that there exists a $p = p(\beta) \in [0,1]$ such that the probability $\P_\beta(C_q^\infty)$ can be bounded from below by the probability of an infinite connected component containing the origin in percolation on $\L^d$ with parameter $p$.
Percolation on $\L^d$ refers to a random subgraph of $\L^d$, where each edge is retained independently with probability $p$.
We will prove the convergence $p \to 1$ as $\beta \to \infty$.  
It follows that $\P_\beta(C_q^\infty) > 0$ for sufficiently large $\beta$, since the critical probability for percolation on the lattice $\L^d$ is strictly less than 1.

\bigskip

\noindent In the following, we assign two Poisson processes to each edge on the lattice $\L_d$, which arise by restriction from an independent marking of $\Psi$.  
Let $\tilde{\Psi}$ be an independent $\mathcal{U}(\{1,\dots,2d\})$-marking of $\Psi$.
Recall that the construction of $\Delta$ (compare Section \ref{Sec:Definition_Model}) uses a partition $\mathcal{Q}^t=(Q_i^t)_{i\in\N_0}$ of $\R^d$ into (half-open) cubes $Q_i^t:=[-t,t)^d+2t z_i$ for some $t>0$, but any numbered partition into measurable sets could be used.
For this proof we use the refined decomposition $(Q_i^{\delta_*} \times \AA \times \{l\})_{i \in \N_0, l \in \{1,\dots,2d\}}$ of $\R^d \times \AA \times \{1,\dots,2d\}$ with the parameter $\delta_* := \delta/2\sqrt{d+3}$ for the construction of the simplicial complex $\Delta$, where $\delta$ is taken from property \ref{V1} of the connection functions.
The additional labels in $\{1, \dots, 2d\}$ are solely used to refine the partition and therefore do not affect the distribution of $\Delta$.  
We say that two cubes $Q_i^{\delta_*}$ and $Q_j^{\delta_*}$ are adjacent if $z_i$ and $z_j$ are adjacent in $\L^d$, i.e., if $\Vert z_i - z_j \Vert = 1$.  
For two adjacent cubes $Q_i^{\delta_*}$ and $Q_j^{\delta_*}$, we then have  
\begin{align*}
\diam(Q_i^{\delta_*} \cup Q_j^{\delta_*}) = 2\sqrt{d+3} \delta_* = \delta,
\end{align*}  
which explains the definition of $\delta^*$.
We denote by $\tilde{\Psi}_i^l$ the restriction of $\tilde{\Psi}$ to $Q_i^{\delta_*} \times \AA \times \MM \times \{l\}$.  
For each $i \in \N_0$, there are $2d$ cubes adjacent to $Q_i^{\delta_*}$, and we assign one of the processes $\tilde{\Psi}_i^l$ to each of those cubes.  
This assignment can also be made explicit, but this is not necessary here.  
Thus, each ordered pair of adjacent cubes $(Q_i^{\delta_*}, Q_j^{\delta_*})$ is assigned exactly one process (and each edge of the lattice $\L^d$ is assigned exactly two processes).
We now define an algorithm that sequentially reveals subprocesses $\Psi_i^l$. 
By the construction of $\Delta$, at any point in time, the complex consisting of points from the processes revealed up to that point is $\P$-almost surely uniquely determined by the revealed processes.  
When we refer to revealing a process $\Psi_i^l$, we always mean that we fully uncover its projection onto $\R^d \times \AA \times \{1,\dots,2d\}$ and subsequently reveal all components of marks in $\MM$ that are necessary for the complex consisting of points from the processes revealed so far.  
Thus, infinitely many components in $\MM$ remain unrevealed at all times, which may still be used for the complex at a later stage if needed.  
To keep the notation as simple as possible, we identify the vertices and simplices of $\Delta^\0$ with their spatial components during the proof and denote the corresponding mark in $\AA$ for $x \in \Delta^\0$ by $a(x)$.
Here $\Delta^\0$ arises from $\tilde{\Psi}+\delta_{(\0,V,U,1)}$ (the additional label of the added vertex plays no role).
\begin{enumerate}
\item[(I)] Let $G_0$ be the graph with vertex set $\Z^d$ that initially has no edges, and let $\mathcal{C}_0(\0)$ be the connected component of the origin in $G_0$.  
We first reveal the process $\delta_{(\0,V,U,1)}+\sum_{l=1}^{2d}\tilde{\Psi}_0^l$ and denote by $\Delta_0^\0$ the simplicial complex resulting from this process.  
If there exist points $x_1,\dots,x_q$ in this complex such that $[\0,x_1,\dots,x_q]\in\Delta_0^\0$ and $a(x_i)\in A$ for all $i\in\{1,\dots,q\}$, and if $V \in A$, we set $\pi_0:=[\0,x_1,\dots,x_q]$.
Otherwise, we terminate the algorithm here.
\item[(II)] Let the graph $G_m$ be given, and let $\mathcal{C}_m(\0)$ be the connected component of the origin in $G_m$, with $\Delta_m^\0$ denoting the complex consisting of the points from the previously revealed processes.  
Furthermore, for each $k \in \N_0$ with $z_k \in \mathcal{C}_m(\0)$, there is a $q$-simplex $\pi_k \in \Delta_m^\0$ consisting of points in $Q_k^{\delta_*}$ with labels in $A$.  
We choose an edge $[z_i,z_j]$ of the lattice $\L^d$ with $z_i \in \mathcal{C}_m(\0)$ and $z_j \notin \mathcal{C}_m(\0)$, which has not yet been considered by the algorithm.  
If no such edge exists, we terminate the algorithm.
\item[(III)] Let $\pi_i=[y_0,\dots,y_q]$ and $l\in\{1,\dots,2d\}$ be the unique index such that $\tilde{\Psi}_j^l$ is assigned to the edge $[z_i,z_j]$.  
We reveal the process $\tilde{\Psi}_j^l$ and denote by $\Delta_{m+1}^\0$ the complex resulting from the revealed processes.  
If we find distinct points $x_0,\dots,x_q$ in the process $\tilde{\Psi}_j^l$ such that $a(x_i)\in A$ for all $i \in \{0,\dots,q\}$, and the property
\begin{align}\label{NachbarwürfelEigenschaft}
[x_0,\dots,x_{j},y_{j},\dots,y_q]\in\Delta_{m+1}^\0 \qquad \text{for all } j\in\{0,1,\dots,q\},
\end{align}
holds, we add the edge $[z_i,z_j]$ to the graph $G_m$ and denote the resulting graph by $G_{m+1}$.  
We also set $\pi_j:=[x_0,\dots,x_q]$.  
If no such points exist, we set $G_{m+1} = G_m$.
\item[(IV)] Return to step (II).
\end{enumerate}
\noindent Note that property (\ref{NachbarwürfelEigenschaft}) in step (III) implies that $\pi_i$ and $\pi_j$ in $G_q(\Delta^\0)$ are connected by an edge path.  
This algorithm determines a random subgraph of the lattice $\L^d$.  
If the algorithm terminates after finitely many steps, the subgraph, and in particular the connected component of the origin within this subgraph, is finite.  
We will show that, with sufficiently large intensity, this does not occur with positive probability.  
Due to property \ref{V1} of the connection functions, with positive probability the algorithm does not stop in the first step.  
In the following, we consider the situation where, in step (II) of the algorithm, an edge $[z_i,z_j]$ with the required properties is found.  
All objects that are revealed in step (III) are stochastically independent of all objects revealed in previous steps.  
To estimate the probability that the edge $[z_i,z_j]$ is added to the graph, independent of the previous course of the algorithm, we use the representation
\begin{align*}
\tilde{\Psi}_j^l \,\stackrel{d}{=}\, \sum_{k=1}^\tau \delta_{(X_k,V_k,U_k,l)}
\end{align*}
with independent random elements $X_i \sim (2\delta_*)^{-d}\lambda_d(\cdot\cap Q_i^{\delta_*})$, $V_i \sim \Theta$, $U_i \sim \Q$, $\tau \sim \mathrm{Po}\left(\beta \delta_*^d/d\right)$ (cf. Proposition 3.5 in \cite{Last.Lectures}).
For $I \subset \mathbb{N}$ with $|I| = q + 1$, we write $\sigma_I$ for the complex consisting of the points $X_i$ with $i \in I$. We say that such a simplex $\sigma_I$ satisfies property (\ref{NachbarwürfelEigenschaft}) if $V_i \in A$ for all $i \in I$ and there is a numbering $X_{l_0}, \dots, X_{l_q}$ of the points of $\sigma_I$ such that property (\ref{NachbarwürfelEigenschaft}) holds for $x_i = X_{l_i}$. Furthermore, we define $\sigma_k := [X_{(k-1)(q+1)+1}, \dots, X_{k(q+1)}]$, for $k \in \mathbb{N}$, and
\begin{align*}
N(l) \,:=\, \max\left\{ k \in \{0, \dots, l\} \;\|\; k(q+1) \leq l \right\}, \qquad l \in \mathbb{N}_0.
\end{align*}
Given the previous progress of the algorithm (up to step (2)), we can lower bound the probability for adding the edge $[z_i, z_j]$ independent of the previous progress of the algorithm by
\begin{align}
&\P\left( \sigma_I \text{ satisfies property } (\ref{NachbarwürfelEigenschaft}) \text{ for some } I \subset [\tau] \text{ with } |I| = q + 1 \right) \nonumber \\
&\quad \geq\, \P\left( \sigma_k \text{ satisfies property } (\ref{NachbarwürfelEigenschaft}) \text{ for some } k \in [N(\tau)] \right) \nonumber \\
&\quad =\, 1 - \P\left( \sigma_k \text{ satisfies property } (\ref{NachbarwürfelEigenschaft}) \text{ for no } k \in [N(\tau)] \right) \nonumber \\
&\quad =\, 1 - \mathbb{E} \left[ \P\left( \sigma_1, \dots, \sigma_{N(\tau)} \text{ do not satisfy property } (\ref{NachbarwürfelEigenschaft}) \mid \tau \right) \right] \nonumber \\
&\quad =\, 1 - \mathbb{E} \left[ \P\left( \sigma_1 \text{ does not satisfy property } (\ref{NachbarwürfelEigenschaft}) \mid \tau \geq q + 1 \right)^{N(\tau)} \right]. \label{Zwischenabschätzung}
\end{align}
Due to property \ref{V1} and $\diam(Q_i^{\delta_*} \cup Q_j^{\delta_*}) = \delta$, the following estimate holds
\begin{align*}
\P\left( \sigma_1 \text{ satisfies property } (\ref{NachbarwürfelEigenschaft}) \mid \tau \geq q + 1 \right) \,\geq\, \Theta(A)^{q+1} \varepsilon^{q+1} \,>\, 0.
\end{align*}
Therefore, we can lower bound the expression in (\ref{Zwischenabschätzung}) by
\begin{align*}
1 - \mathbb{E} \left[ \left( 1 - \Theta(A)^{q+1} \varepsilon^{q+1} \right)^{N(\tau)} \right]\, =:\, p.
\end{align*}
We show that $p \to 1$ as $\beta \to \infty$.
For this, we abbreviate $p_0 := 1 - \Theta(A)^{q+1} \varepsilon^{q+1}$ and $b := \beta \delta_*^d/2d$, so $\tau \sim \mathrm{Po}(b)$.
Since $p_0 < 1$, for $b \geq 1$ we have
\begin{align*}
\mathbb{E} \left[ p_0^{N(\tau)} \right] \,&=\, \sum_{k=0}^\infty e^{-b} \frac{b^k}{k!} p_0^{N(k)} \\
&=\, e^{-b} \sum_{k=0}^\infty \sum_{l=0}^q \frac{b^{k(q+1)+l}}{(k(q+1)+l)!} p_0^k \\
&\leq\, e^{-b} \sum_{k=0}^\infty \sum_{l=0}^q \frac{b^{k(q+1)+q}}{(k(q+1))!} p_0^k \\
&=\, e^{-b} (q+1) b^q \sum_{k=0}^\infty \frac{b^{k(q+1)}}{(k(q+1))!} \left(p_0^{\frac{1}{q+1}}\right)^{k(q+1)} \\
&\leq\, e^{-b} (q+1) b^q \sum_{k=0}^\infty \frac{b^{k}}{k!} \left(p_0^{\frac{1}{q+1}}\right)^{k} \\
&=\, e^{-\beta} (q+1) b^q \exp\left( b p_0^{\frac{1}{q+1}} \right) \\
&=\, (q+1) b^q \exp\left( -b\left(1 - p_0^{\frac{1}{q+1}}\right) \right) \;\rightarrow\; 0 \quad \text{as } b \to \infty,
\end{align*}
which completes the proof.
We extend the random subgraph of the lattice $\L^d$ determined by the algorithm by independently deciding for each edge fo $\L^d$ not considered by the algorithm with probability $p$ whether to add it to the random subgraph.
The resulting random graph is denoted by $G$. \\
Since, in each step of the algorithm, the objects revealed in the current step are independent of those revealed in previous steps, and since the probability of adding the considered edge is always at least $p$, we can couple $G$ with a random graph $H$ obtained from percolation on $\L^d$ with parameter $p$, such that $H \subseteq G$.
Because $p \to 1$ as $\beta \to \infty$, there exists a $\beta > 0$ such that the origin has an infinite connected component in $H$, and thus in $G$, with positive probability.
In this case, we find infinitely many simplices $\pi_i \in \Delta^\0$ in the course of the algorithm, which are connected to $\pi_0$ in $G_q(\Delta^\0)$ by an edge path.
Thus, the event $C_q^\infty$ occurs with positive probability, which proves that $\beta_c^{q} < \infty$.
\end{proof}

We would like to point out that the algorithm used in the proof of Theorem \ref{Th:kritischeIntensität} is typically (in the proofs of similar results) constructed differently and reduces the situation to percolation on the two-dimensional lattice $\L^2$.  
Examples of this are Theorem 1 in \cite{Penrose.RCM} and Lemma 2.2 in \cite{Caicedo}.
In Theorem 1 in \cite{Penrose.RCM}, the statement of Theorem \ref{Th:kritischeIntensität} for $q=0$ in the unmarked stationary case was shown under a minimal integration assumption.
An open question remains whether strict inequalities actually hold between the critical intensities from Theorem \ref{Th:kritischeIntensität}.
In \cite{Hirsch}, a conjecture (between Theorem 2 and Theorem 3) is made, the validity of which would imply strict inequalities for the critical intensities corresponding to the Vietoris-Rips complex.

\begin{Prop}
In the situation of Theorem \ref{Th:kritischeIntensität}, for $\beta>\beta_c^{(q)}$ with $q<\alpha$, it holds that
\begin{align*}
\P_\beta\left( \text{There is an infinite connected component in } G_q(\Delta) \right) = 1.
\end{align*}
\end{Prop}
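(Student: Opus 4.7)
The plan is to combine two ingredients: (i) the definition of $\beta_c^{(q)}$ gives, for $\beta>\beta_c^{(q)}$, positive probability that $G_q(\Delta^\0)$ contains an infinite connected component; and (ii) a translation $0$-$1$ law upgrades positive probability for $\Delta$ to probability $1$, after transferring from $\Delta^\0$ to $\Delta$ via a finite-modification argument.

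For the transfer step, one first observes that under a suitable coupling $\Delta$ is obtained from $\Delta^\0$ by deleting the vertex $\0$ and all simplices containing it (for instance, by replacing the cube-based construction of Section~\ref{Sec:Definition_Model} with the distributionally equivalent construction that assigns i.i.d.\ $\mathcal{U}([0,1])$ marks directly to finite subsets of the underlying point process). Letting $V_0$ denote the set of $q$-simplices of $\Delta^\0$ containing $\0$, the graph $G_q(\Delta)$ is then the induced subgraph of $G_q(\Delta^\0)$ on $F_q(\Delta^\0)\setminus V_0$, since any $(q+1)$-simplex witnessing an edge between two vertices outside $V_0$ does not involve $\0$ and is hence already a simplex of $\Delta$. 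Property \ref{V2} supplies two crucial facts: every $q$-simplex of $\Delta^\0$ possessing at least one $G_q$-edge has diameter at most $D$, so any such element of $V_0$ is contained in $B(\0,D)$ and $V_0$ is consequently almost surely finite; and every vertex of $G_q(\Delta^\0)$ has finite degree, because each of its neighbours is obtained by replacing one of its $q+1$ vertices with a Poisson point within distance $D$ of the simplex, and such points are almost surely finite in number. A standard pruning argument then shows that deleting finitely many finite-degree vertices from an infinite connected component leaves at least one infinite connected component in the resulting induced subgraph. Hence, whenever $G_q(\Delta^\0)$ has an infinite connected component, so does $G_q(\Delta)$, which yields $\P_\beta(E)>0$ where $E:=\{G_q(\Delta)\text{ has an infinite connected component}\}$.

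For the $0$-$1$ step, the event $E$ is translation-invariant: in the translation-equivariant construction mentioned above, translating the underlying Poisson process by any $t\in\R^d$ translates $\Delta$ by $t$, and the existence of an infinite connected component in $G_q(\Delta)$ is unaffected. Since the underlying marked Poisson process is ergodic under spatial translations of $\R^d$, we obtain $\P_\beta(E)\in\{0,1\}$, and the positivity from the transfer step forces $\P_\beta(E)=1$. The main obstacle is the transfer step, where one must fix a coupling making $\Delta^\0\setminus\{\sigma\colon\0\in\sigma\}=\Delta$ pathwise and invoke \ref{V2} to control both the finiteness of $V_0$ and the local finiteness of $G_q(\Delta^\0)$; the $0$-$1$ law step is then routine.
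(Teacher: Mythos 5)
Your proof is correct but takes a genuinely different route on the $0$-$1$ law step. The paper applies Kolmogorov's $0$-$1$ law directly to the cube $\sigma$-fields $\A_i = \sigma(\Psi_{Q_i^t})$: since removing finitely many cubes' worth of points almost surely removes only finitely many non-isolated $q$-simplices (by \ref{V2}), the percolation event $B_q^\infty$ is independent of $(\Psi_i)_{i\in I}$ for every finite $I\subset\N_0$, which forces $\P_\beta(B_q^\infty)\in\{0,1\}$. You instead invoke ergodicity of the marked Poisson process under spatial translations. This requires $E$ to be a translation-invariant event \emph{on the Poisson process}, i.e., it requires a translation-equivariant measurable map $\Psi\mapsto\Delta$; the cube-based construction of Section \ref{Sec:Definition_Model} is deliberately \emph{not} equivariant, so, as you note, you must swap it for a distributionally equivalent equivariant one. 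Such a construction does exist (e.g., by enriching each vertex mark with an auxiliary point process whose components are read off at relative positions of the other vertices), but making it rigorous is the nontrivial technicality that the Kolmogorov route sidesteps entirely, so it deserves more than a parenthetical. Your transfer argument from $\Delta^\0$ to $\Delta$ is more explicit than the paper's, which simply asserts $\P_\beta(B_q^\infty)>0$; the pruning argument is sound, but note that $V_0$ itself may be infinite (\ref{V2} bounds the diameter of $(q+1)$-simplices, not of $q$-simplices), so what is almost surely finite is only the set of non-isolated elements of $V_0$ — which is all the pruning argument needs. The two approaches buy different things: yours is structurally more transparent and would transfer to other stationary ergodic point processes, while the paper's stays entirely within the given construction and avoids any equivariance issues.
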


\begin{proof}
Let $B_q^\infty$ denote the event that the graph $G_q(\Delta)$ percolates, i.e., has an infinite connected component.
Choose some $t>0$ and define $\Psi_i := \Psi_{Q_i^t}$ and $\A_i := \sigma(\Psi_i)$ for $i\in\N_0$.
Then, the family $(\A_i)_{i\in\N_0}$ is stochastically independent.
For each finite subset $I \subset \N_0$, the process $\sum_{i\in I}\Psi_i$ contains almost surely only finitely many points.
Since each of these points, due to \ref{V2}, almost surely has finite simplex degrees, the graph $G_q(\Delta)$ almost surely contains only finitely many vertices that intersect the set $W := \cup_{i\in I} Q_i^t$ (the vertices of $G_q(\Delta)$ are $q$-simplices).
Thus, the graph $G_q(\Delta)$ almost surely has an infinite connected component if and only if this is true for the graph $G_q(\Delta_{W^\mathrm{C}})$.
The latter depends only on $\sum_{j\in \N_0 \setminus I} \Psi_j$, which means that the event $B_q^\infty$ is independent of $(\Psi_i)_{i\in I}$.
Since this holds for every finite subset $I\subset\N_0$, Kolmogorov's 0-1 law implies that $\P_\beta(B_q^\infty) \in \{0, 1\}$.
Since $\beta > \beta_c^{(q)}$, we have $\P_\beta(B_q^\infty) > 0$, and the claim follows.
\end{proof}

\section{Sharp phase transition} \label{Sec:SharpPhaseTransition}

The goal of this section is to derive a sharp phase transition for the percolation function (which we will define precisely later), meaning a result of the form of Theorem 1 in \cite{Hirsch} or Theorem 8.2 in \cite{Last.OSSS}.  
The former result concerns percolation of $\tilde{G}_q(\Delta)$ of the Vietoris-Rips complex and is based on the discrete OSSS inequality, which was first proved in \cite{ODonnell} (Theorem 3.1) and is also detailed in \cite{Duminil-Copin} (Theorem 1.9), where it is described in a manner directly applicable to our situation.
In \cite{Last.OSSS}, a continuous version of the OSSS inequality for functionals of a Poisson process is derived using decision trees in continuous time, which is then used to establish a sharp phase transition for $k$-percolation in the Boolean model (see Chapter 8 there). \\
Our strategy is to adapt the application of the OSSS inequality, as demonstrated in \cite{Duminil-Copin,Hirsch}, for proving a sharp phase transition in percolation models, to our specific scenario.
In \cite{Duminil-Copin} percolation in the Boolean model with balls as grains is examined, which, as we will demonstrate (cf. Section \ref{Sec:ExampleModels}), constitutes a special case of the situation we address here, similar to the Vietoris-Rips complex considered in \cite{Hirsch}.
In these cases, the connection functions take values only in $\{0,1\}$, which simplifies the situation compared to general connection functions.
Whereas \cite{Hirsch} deals with the unmarked scenario and \cite{Duminil-Copin} uses the mark space $\R_{\geq 0}$, our work involves a general mark space $\AA$. \\  
Throughout this section, let a fixed $q\in\{0,\dots,\alpha-1\}$ be given, and assume that the connection functions always satisfy properties \ref{V1} and \ref{V2} for this fixed $q$ with parameters $\delta,\varepsilon,D>0$ and $A\subseteq\AA$. 
For the construction of the simplicial complex $\Delta$, we choose $t=D$ (cf. Section \ref{Sec:Definition_Model} for a detailed description of the construction), which means that we decompose $\R^d$ into cubes of edge length $2D$.
Recall that $\Delta^\0$ denotes the simplicial complex constructed from $\Psi+\delta_{(\0,V,U)}$.  
Since we will always work with $\Delta^\0$ in this section, we set $\Delta:=\Delta^\0_{\R^d\setminus\{\0\}}$ to avoid unnecessary complications in notation.  
Note that this does not change the distribution of $\Delta$.  
Now, we introduce the percolation events necessary for proving the sharp phase transition.  
For two simplices $\sigma,\rho\in F_q(\Delta^\0)$, we write $\sigma\xleftrightarrow{q}\rho$ as shorthand if they are connected by an edge path in $G_q(\Delta^\0)$.  
For $x\in\R^d$, $B\in\mathcal{B}(\R^d)$, $r>0$, and a simplicial complex $K$, we define  
\begin{align*}
x\xleftrightarrow{K} B &\,:=\, \big\{ \exists\sigma,\rho\in F_q(K) \text{ with } \sigma\xleftrightarrow{q}\rho, x\in\sigma, \diam(\sigma)\leq D, \rho\cap B\neq\emptyset \big\}, \\
x\leftrightarrow B &\,:=\, x\xleftrightarrow{\Delta} B, \\
x\rightleftarrows B &\,:=\, (x\leftrightarrow B) \cap (x\leftrightarrow B^c), \\
B_r &\,:=\, \0\xleftrightarrow{\Delta^\0} B(\0,r)^c, \\
\theta_r(\beta) &\,:=\, \P( B_r ), \\
\theta_\infty(\beta) &\,:=\, \P\big( \exists\sigma\in F_q(\Delta^\0) \text{ with an infinite component in } G_q(\Delta^\0) \text{ and }\0\in\sigma \big).
\end{align*} 
The event $B_r$ can be interpreted as the existence of an edge path from the origin to $B(\0,r)^c$ in $G_q(\Delta^\0)$.  
Note that simplices $\sigma\in F_q(\Delta^\0)$ with $\diam(\sigma)>D$ form $\P$-almost surely isolated vertices in $G_q(\Delta^\0)$ due to property \ref{V2} of the connection functions and are therefore irrelevant for the percolation of $\Delta^\0$.  
In defining the event $B_r$ for $r>D$, we explicitly exclude the case where there exists a simplex $\sigma\in F_q(\Delta^\0)$ with $\0\in\sigma$ and $\sigma\cap B(\0,r)^c\neq\emptyset$ (this would imply $\diam(\sigma)>D)$.  
As a result, due to the construction used for $\Delta^\0$, the event $B_r$ (at least $\P$-almost surely) depends only on $\Psi_{W^{(r)}}+\delta_{(\0,V,U)}$, where $W^{(r)}$ is the union of all cubes $Q\in\mathcal{Q}^D$ with $Q\cap B(\0,r+D)\neq\emptyset$.
In particular $W$ is bounded.
The notation $x\rightleftarrows B$ serves the purpose of avoiding a case distinction based on whether $x\in B$ or $x\notin B$, while keeping in mind that this notation always refers to the complex $\Delta$ and not to $\Delta^\0$.
Finally, we point out that  
\begin{align*}
\theta_\infty(\beta) \,=\, \lim_{r\rightarrow\infty} \theta_r(\beta)
\end{align*}  
holds.
We call the function $\theta:(0,\infty]\times(0,\infty)\rightarrow[0,1], (r,\beta)\mapsto\theta_r(\beta)$ the percolation function of the model.
To derive a sharp phase transition for $q$-percolation, we define an algorithm that determines the occurrence of the event $B_r$ and to which we can then apply the OSSS inequality.
For this, let $d(B_1,B_2):=\inf\{ \Vert x-y \Vert \,:\, x\in B_1, y\in B_2 \}$ for $B_1,B_2\in\mathcal{B}(\R^d)$.

\begin{Alg}
Let $0<s\leq r$ and  
\begin{align*}
I_r \,:=\, \big\{i\in\N_0 \,\|\, Q_i^D\cap B(\0,r+D)\neq\emptyset\big\}, \qquad W^{(r)}\,:=\,\cup_{i\in I_r} Q_i^D.
\end{align*}
We decompose the process $\Psi_{W^{(r)}}+\delta_{(\0,V,U)}$ into the subprocesses
\begin{align*}
\Psi_0 \,&:=\, \Psi_{Q_0^D} + \delta_{(\0,V,U)}, \qquad \Psi_i \,:=\, \Psi_{Q_i^D}, \qquad i\in I_r\setminus\{\0\}.
\end{align*}
In the following, we say that a connected component of $G_q(\Delta_W)$ for some $W\subset\R^d$ intersects the sphere $\partial B(\0,s)$ if there exist simplices $\sigma,\rho$ in this connected component with $\sigma\cap B(\0,s)\neq\emptyset$ and $\rho\cap B(\0,s)^c\neq\emptyset$.
Define an algorithm that reveals one of the processes $\Psi_i$, $i\in I_r$, step by step, as follows.
\begin{enumerate}
\item[(I)] Set $T_0:=\{ i\in I_r \,|\, d(Q_i^D,\partial B(\0,s))\leq D \}$ and $W_0:=\cup_{i\in T_0} Q_i^D$.  
Reveal all Poisson processes $\Psi_i$ with $i\in T_0$ and denote by $\mathcal{C}_0$ the set of all $q$-simplices in $\Delta_{W_0}$ with diameter at most $D$ whose connected component in $G_q(\Delta_{W_0})$ intersects the sphere $\partial B(\0,s)$.  
\item[(II)] Let $T_m\subseteq I_r$ be the set of indices of all previously revealed processes, $W_m:=\cup_{i\in T_m} Q_i^D$, and $\mathcal{C}_m$ the set of all $q$-simplices in $\Delta_{W_m}$ with diameter at most $D$ whose connected component in $G_q(\Delta_{W_m})$ intersects the sphere $\partial B(\0,s)$.  
\begin{itemize}
\item If there exists a cube $Q_i^D$ with $i\in I_r\setminus T_m$ such that there is a simplex $\sigma$ in $\mathcal{C}_m$ with $d(\sigma,Q_i^D)\leq D$, then set $T_{m+1}:=T_m\cup\{i\}$ and $W_{m+1}:=\cup_{i\in T_{m+1}} Q_i^D$.  
Reveal the Poisson process $\Psi_i$ and denote by $\mathcal{C}_{m+1}$ the set of all $q$-simplices in $\Delta_{W_{m+1}}$ with diameter at most $D$ whose connected component in $G_q(\Delta_{W_{m+1}})$ intersects the sphere $\partial B(\0,s)$.  
\item If no such cube exists, stop the algorithm.  
\end{itemize}
\end{enumerate}
\end{Alg}

\begin{figure}[t]
  \captionsetup{ labelfont = {bf}, format = plain }
  \centering
  \includegraphics[width=0.7\linewidth]{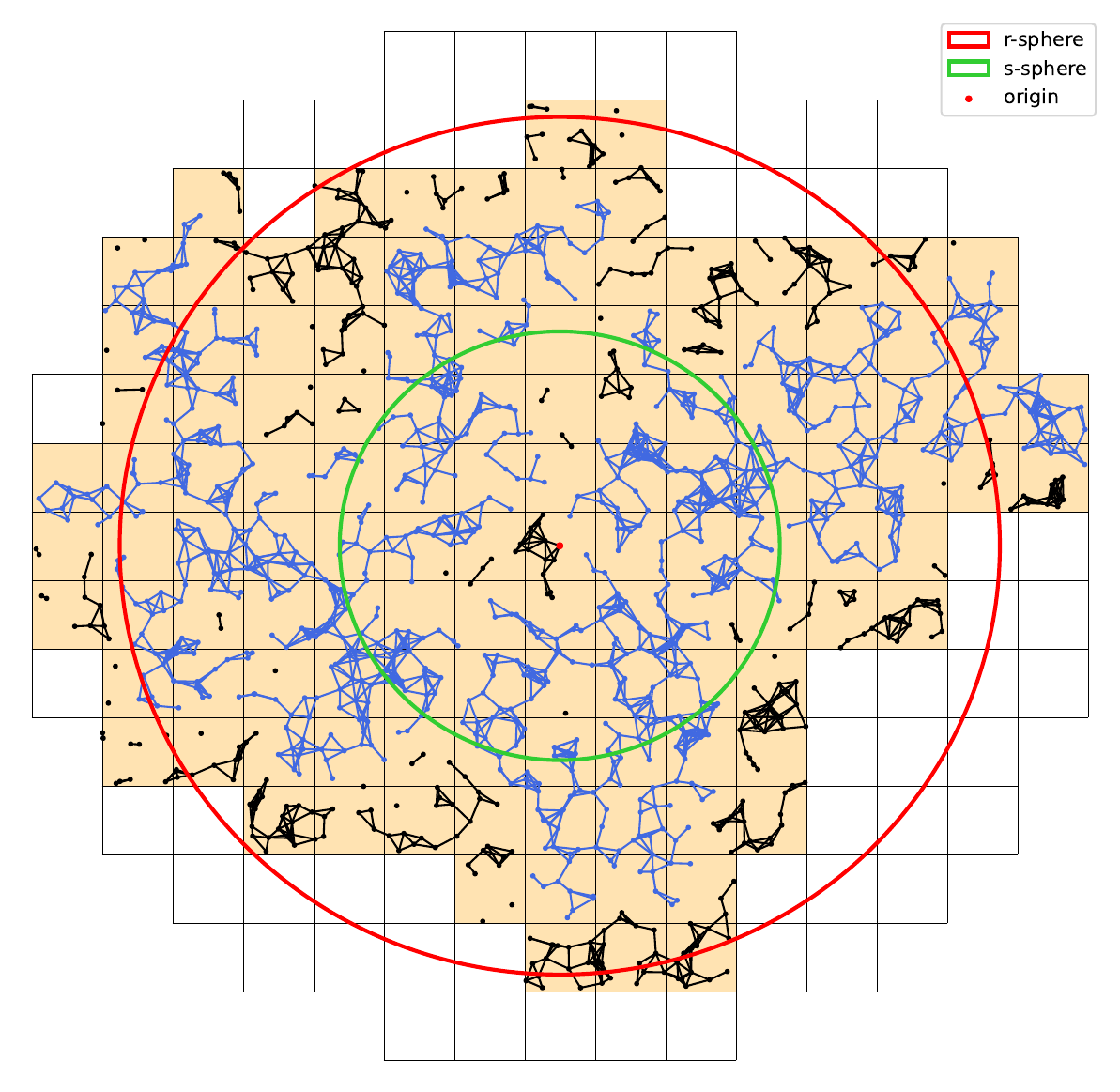}
  \caption{A realisation of the algorithm in the unmarked case for $q=0$, $\varphi_1(x,y)=\1\{ \Vert x-y \Vert\leq r_0 \}$ with $r_0=\frac{3}{5}$, $r=10$, $s=5$, $D=\frac{4}{5}$, and $\beta=4$, where the connected components of $\Delta_{W^{(r)}}^\0$ that intersect the sphere $\partial B(\0,s)$ are shown in blue.
  The event $B_r$ has not occurred in this realisation.}
  \label{fig:OSSS}
\end{figure}

Clearly, the algorithm stops after finitely many steps due to $|I_r|<\infty$.
The crucial point is that, $\P$-almost surely, the occurrence of the event $B_r$ is determined by the Poisson processes revealed up to the stopping time of the algorithm.
Note that the sets $\mathcal{C}_m$ from the algorithm always refer to the complex $\Delta_{W_m}$ and not to $\Delta_{W_m}^\0$, and the algorithm reveals $\P$-almost surely all connected components in $\Delta_{W^{(r)}}$ that intersect the sphere $\partial B(\0,s)$.
Since all simplices $\sigma \in F_q(\Delta^\0)$ with $\0 \in \sigma$ and $\diam(\sigma) \leq D$ and all their neighbors in $G_q(\Delta^\0)$ have $\P$-almost surely vertices in $Q_0^D \times \AA$ due to property \ref{V2} of the connection functions, the process $\Psi_0$ is revealed in the case of the occurrence of $B_r$.
Figure \ref{fig:OSSS} shows a realisation of the algorithm for the geometric graph, where the event $B_r$ did not occur.
The OSSS inequality (Theorem 1.9 in \cite{Duminil-Copin}) now gives
\begin{align} \label{OSSS}
\theta_r(\beta)(1-\theta_r(\beta)) \,\leq\, \sum_{i\in I_r} \delta_i(s) \, \zeta_i
\end{align}
with
\begin{align*}
\delta_i(s) &\,:=\, \P\big( \Psi_i \text{ is revealed by the algorithm} \big), \\
\zeta_i &\,:=\, \P\Big( \1\big\{\Psi_{W^{(r)}}+\delta_{(\0,V,U)}\in B_r\big\} \neq \1\big\{\tilde{\Psi}_i\in B_r\big\} \Big),
\end{align*}
where $\tilde{\Psi}_i$ is a Poisson process arising from $\Psi_{W^{(r)}}+\delta_{(\0,V,U)}$ by replacing $\Psi_i$ with an independent copy.
Note that only $\delta_i(s)$ depends on $s$. To prove a sharp phase transition, we need upper bounds for these two quantities that allow us to apply the Margulis-Russo formula.
To this end, we formulate two lemmas that provide these upper bounds.

\begin{Lem} \label{Lem:SimWkeit}
For $i\in I_r$, we have
\begin{align*}
\int_0^r \delta_i(s) \;\d s \,\leq\, 2D(1+\sqrt{d}) + 2\beta(4D)^d \int_0^r \theta_s \;\d s.
\end{align*}
\end{Lem}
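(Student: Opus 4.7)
The plan is to split $\delta_i(s)\le\1\{i\in T_0\}+\P(\Psi_i\text{ is revealed at some step }m\ge 1)$ and to bound the integral of each term separately. For the indicator term, $i\in T_0$ is equivalent to $\min_{z\in Q_i^D}\bigl|\|z\|-s\bigr|\le D$; since $\{\|z\|:z\in Q_i^D\}$ is an interval of length at most $\diam(Q_i^D)=2D\sqrt d$, the set of $s\in[0,r]$ for which this holds has Lebesgue measure at most $2D\sqrt d+2D=2D(1+\sqrt d)$, which contributes exactly the first summand of the bound.

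For the second term, the algorithm's rule forces the existence of a simplex $\sigma\in\mathcal{C}_{m-1}$ with $\diam(\sigma)\le D$, $d(\sigma,Q_i^D)\le D$, whose component in $G_q(\Delta_{W_{m-1}})\subseteq G_q(\Delta)$ intersects both $B(\0,s)$ and $B(\0,s)^c$. Choosing a vertex $x\in\sigma$ realizing $d(\sigma,Q_i^D)$ gives $x\in Q_i^D+B(\0,D)$ and $x\rightleftarrows B(\0,s)$ in $\Delta$. A union bound combined with Mecke's formula for $\Psi$ then yields
\begin{align*}
\P(\Psi_i\text{ revealed at step }\ge 1) \,\le\, \beta\int_{Q_i^D+B(\0,D)}\int_\AA \P\bigl((y,a)\rightleftarrows B(\0,s)\bigr)\,\Theta(\d{a})\,\d{y},
\end{align*}
where the probability on the right is evaluated in the complex built from $\Psi+\delta_{(y,a,U_0)}$ with a fresh uniform $U_0\in\MM$.

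Translation invariance of the connection functions reduces the integrand to $\P((\0,a)\rightleftarrows B(-y,s))$ in the corresponding model with the origin added. If this event occurs, then some vertex $z$ in the cluster of the added origin satisfies either $\|z+y\|\le s$ (whence $\|z\|\ge\|y\|-s$) or $\|z+y\|>s$ (whence $\|z\|>s-\|y\|$); in either case the cluster reaches Euclidean distance at least $\bigl|\|y\|-s\bigr|$ from $\0$. Integrating over $a\sim\Theta$ therefore bounds the integrand by $\theta_{|\|y\|-s|}$. Since $\theta$ is non-increasing, a change of variables $u=\bigl|\|y\|-s\bigr|$ combined with a case split on $\|y\|\le r$ versus $\|y\|>r$ (using monotonicity to shift the resulting integration interval back into $[0,r]$ in the latter case) gives
\begin{align*}
\int_0^r\theta_{|\|y\|-s|}\,\d{s} \,\le\, 2\int_0^r\theta_u\,\d{u}.
\end{align*}
The volume bound $\mathrm{vol}(Q_i^D+B(\0,D))\le(4D)^d$, coming from the fact that the Minkowski sum of a cube of side $2D$ and the Euclidean ball of radius $D$ is contained in a cube of side $4D$, then yields the claim.

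The main obstacle is the coupling between the algorithm's simplex-level ``crosses $\partial B(\0,s)$'' condition and the vertex-level event $x\rightleftarrows B(\0,s)$ in the stripped complex $\Delta$; making this rigorous requires property \ref{V2} so that all simplices entering the argument have diameter at most $D$, which is what allows the definition of $x\leftrightarrow B$ to apply. A secondary subtlety is the cluster-radius bound in the regime $\|y\|>s$, where the useful geometric information must be extracted from $(\0,a)\leftrightarrow B(-y,s)$ (which forces a vertex of the cluster at distance at least $\|y\|-s$ from the origin) rather than from the complementary half of the $\rightleftarrows$ event, which is essentially trivialized by the added origin itself lying in $B(-y,s)^c$.
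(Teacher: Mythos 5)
Your proof is correct and takes essentially the same approach as the paper's: bound the revealing probability by the probability that some Poisson point in the annular region $Q_i^D+B(\0,D)$ is doubly connected across $\partial B(\0,s)$, apply Mecke, use translation invariance to re-center at the origin, reduce to $\theta$, and integrate in $s$ with a trivial bound near the sphere.

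The only organizational difference is the order of the geometric manipulation and the Mecke step. The paper first passes through the chain $x\rightleftarrows B(\0,s)\Rightarrow x\rightleftarrows B(\tilde z_i,|\|\tilde z_i\|-s|)\Rightarrow x\rightleftarrows B(x,|\|\tilde z_i\|-s|-D(1+\sqrt d))$ so that after Mecke the $\theta$-argument is independent of $x$ (at the cost of a $D(1+\sqrt d)$ loss in the radius, absorbed by the trivial bound), whereas you apply Mecke and translation invariance first and then extract the radius $|\|y\|-s|$ directly from $(\0,a)\rightleftarrows B(-y,s)$, carrying a $y$-dependent exponent into the $\d y$-integral. Both routes produce the same final inequality. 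Your handling of the $T_0$ contribution via the exact geometric characterization of $\{s: i\in T_0\}$, the direction-dependent cluster-radius argument, and the change of variables $u=|\|y\|-s|$ with the monotonicity of $\theta$ are all sound.
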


\begin{proof}
Fix $i\in I_r$ and $0<s\leq r$, and write $B_1+B_2:=\{b_1+b_2\,|\,b_1\in B_1, b_2\in B_2\}$ for the Minkowski sum of two sets $A,B\in\mathcal{B}(\R^d)$.
Furthermore, we write $\tilde{z}_i:=2Dz_i$ for the center of $Q_i^D$.
If $|\Vert\tilde{z}_i\Vert-s|>D(1+\sqrt{d})$, then $\Psi_i$ is not revealed in the first step of the algorithm.
In this case, we have
\begin{align*}
\delta_i(s) \,&\leq\, \P\big( \exists (x,a)\in \Phi_{Q_i+B(\0,D)} \text{ with } x\rightleftarrows B(\0,s) \big) \\  
&\leq\, \P\big( \exists (x,a)\in \Phi_{Q_i+B(\0,D)} \text{ with } x\rightleftarrows B(\tilde{z}_i,|\Vert\tilde{z}_i\Vert-s|) \big) \\  
&\leq\, \P\Big( \exists (x,a)\in \Phi_{Q_i+B(\0,D)} \text{ with } x\rightleftarrows B\big(x,|\Vert\tilde{z}_i\Vert-s|-D(1+\sqrt{d})\big) \Big) \\  
&\leq\, \mathbb{E} \bigg[ \, \sum_{(x,a)\in\Phi_{Q_i+B(\0,D)}} \, \1\Big\{ x\rightleftarrows B\big(x,|\Vert\tilde{z}_i\Vert-s|-D(1+\sqrt{d})\big) \Big\}\, \bigg] \\  
&=\, \beta \int_{Q_i+B(\0,D)} \theta_{|\Vert\tilde{z}_i\Vert-s|-D(1+\sqrt{d})} \; \d x \\  
&\leq\, \beta (4D)^d \;\theta_{|\Vert\tilde{z}_i\Vert-s|-D(1+\sqrt{d})},
\end{align*}
where the Mecke equation (for the Poisson process $\Psi$) was applied in the penultimate step.
In the case of $\Vert \tilde{z}_i \Vert >s$ the second and third estimates are illustrated by Figure \ref{fig:Perkolationslemma}.
Since the interior of the ball $B(\0,s)$ lies in the complement of the ball $B(\tilde{z}_i,|\Vert\tilde{z}_i\Vert-s|)$, the condition $x\rightleftarrows B(\0,s)$ implies $x\rightleftarrows B(\tilde{z}_i,|\Vert\tilde{z}_i\Vert-s|)$.
Furthermore, for $x\in Q_i^D+B(\0,D)$, we always have $\Vert x-\tilde{z}_i \Vert\leq D(1+\sqrt{d})$.
Therefore, it follows that $B\big(x,|\Vert\tilde{z}_i\Vert-s|-D(1+\sqrt{d})\big)\subseteq B(\tilde{z}_i,|\Vert\tilde{z}_i\Vert-s|)$, and thus, from $x\rightleftarrows B(\tilde{z}_i,|\Vert\tilde{z}_i\Vert-s|)$, we also get $x\rightleftarrows B\big(x,|\Vert\tilde{z}_i\Vert-s|-D(1+\sqrt{d})\big)$.
The case $\Vert \tilde{z}_i \Vert <s$ works analogously. \\  
Using the trivial bound $\delta_i(s)\leq 1$ for $|\Vert\tilde{z}_i\Vert-s|\leq D(1+\sqrt{d})$, integration over $s\in (0,r)$ yields the desired statement.
\end{proof}

\begin{figure}[t]
    \captionsetup{
    labelfont = {bf},
    format = plain,
  }
	\centering
	\begin{normalsize}
	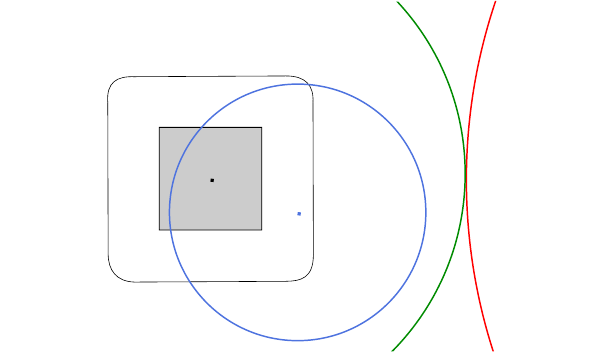
	\end{normalsize}
		 
	\caption{A visualisation of the argumentation from Lemma \ref{Lem:SimWkeit} in the case $\Vert \tilde{z}_i \Vert >s$, where we abbreviate $\tilde{d}:=D(1+\sqrt{d})$.}
	\label{fig:Perkolationslemma}
\end{figure}

\noindent In the next step, we need a suitable upper bound for $\zeta_i$.
To simplify the notation, we write $\eta \in B_r$ if for a counting measure or a point process $\eta$, it holds that in the simplicial complex constructed from $\eta$, the origin is connected to the complement of $B(\0,r)$ in the sense of the definition of $B_r$.
Furthermore let
\begin{align*}
    D_{(x,a,u)}f(\Psi) \,:=\, f(\Psi+\delta_{(x,a,u)})-f(\Psi)
\end{align*}
be the difference operator for a real-valued function $f$ and $(x,a,u)\in\R^d\times\AA\times\MM$.

\begin{Lem} \label{Lem:Einfluss}
For $i \in I_r$, we have
\begin{align*}
\zeta_i \,\leq\, 2\beta e^{\beta(2D)^d} \int_{Q_i^D}\int_\AA\int_\MM \E{ D_{(x,a,u)} f(\Psi) } \; \Q(\d u) \; \Theta(\d a) \; \d x
\end{align*}
with $f:\mathds{N}(\R^d\times\AA\times\MM)\rightarrow\R$,
\begin{align*}
f(\eta) \,:=\, \E{ \1\big\{ \eta+\delta_{(\0,V,U)}\in B_r \big\} }.
\end{align*}
\end{Lem}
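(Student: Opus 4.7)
My strategy is to bring $\zeta_i$ into the form $2\,\E{f(\Psi)-f(\Psi_i^c)}$, where $\Psi_i^c:=\Psi_{W^{(r)}\setminus Q_i^D}$, via a pointwise monotonicity that is available precisely because $\Psi_i^c$ is empty on $Q_i^D$; and then to convert this difference into the desired integral by an Abel summation on the Poisson factorization of $\Psi_i$, with the Poisson tail supplying the factor $e^{\beta(2D)^d}$.

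\textbf{Step 1 (monotone reduction).} By exchangeability of $\Psi_i$ and $\Psi_i'$,
\begin{align*}
\zeta_i \,=\, 2\,\P\big(\Psi_{W^{(r)}}+\delta_{(\0,V,U)}\in B_r,\ \tilde{\Psi}_i\notin B_r\big).
\end{align*}
Since $\Psi_i^c$ carries no point inside $Q_i^D$, adding any configuration in $Q_i^D$ leaves every lexicographic coordinate of a vertex of $\Delta(\Psi_i^c+\delta_{(\0,V,U)})$ unchanged. Hence the $\MM$-indices used to decide membership of each simplex of $\Delta(\Psi_i^c+\delta_{(\0,V,U)})$ are identical to those in $\Delta(\Psi_{W^{(r)}}+\delta_{(\0,V,U)})$ and $\Delta(\tilde{\Psi}_i)$, so this sub-complex is contained in both. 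Writing $F(\eta):=\1_{B_r}(\eta+\delta_{(\0,V,U)})$, this yields the \emph{deterministic} inequalities $F(\Psi_i^c)\leq F(\Psi_{W^{(r)}})$ and $F(\Psi_i^c)\leq F(\tilde{\Psi}_i-\delta_{(\0,V,U)})$, so
\begin{align*}
\zeta_i \,\leq\, 2\,\E{F(\Psi_{W^{(r)}})-F(\Psi_i^c)} \,=\, 2\,\E{f(\Psi)-f(\Psi_i^c)}.
\end{align*}

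\textbf{Step 2 (Abel summation / Poisson tail).} Write $\Psi_i=\sum_{k=1}^N\delta_{Z_k}$ with $N\sim\mathrm{Po}(\lambda)$, $\lambda:=\beta(2D)^d$, and $Z_1,Z_2,\ldots$ iid with distribution $(2D)^{-d}\lambda_d|_{Q_i^D}\otimes\Theta\otimes\Q$, independent of $\Psi_i^c$. Set $\tilde a_n(\Psi_i^c):=\E{f(\Psi_i^c+\sum_{k=1}^n\delta_{Z_k})\mid\Psi_i^c}$, so that $\tilde a_0=f(\Psi_i^c)$ and $\E{\tilde a_N}=\E{f(\Psi)}$. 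Abel summation together with the elementary tail bound $\P(N>k)\leq\lambda^{k+1}/(k+1)!\leq\lambda^{k+1}/k!$ give
\begin{align*}
\E{\tilde a_N-\tilde a_0} \,=\, \sum_{k\geq 0}\P(N>k)\,\E{\tilde a_{k+1}-\tilde a_k} \,\leq\, \lambda e^\lambda\,\E{\tilde a_{M+1}-\tilde a_M},
\end{align*}
with $M\sim\mathrm{Po}(\lambda)$ independent of $\Psi_i^c$. The nonnegativity of each $\E{\tilde a_{k+1}-\tilde a_k}$ used here is exactly the \emph{distributional} monotonicity of $f$ under adding an independent, freshly $\MM$-marked point: since the $\MM$-marks of $\Psi$ and of the extra point are iid uniform, each potential simplex is included in $\Delta$ independently with the right probability, so adding vertices stochastically enlarges the complex.

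\textbf{Step 3 (identification).} Since $\sum_{k=1}^M\delta_{Z_k}\stackrel{d}{=}\Psi_i$, one has $\E{\tilde a_M}=\E{f(\Psi)}$; and adding one more iid $Z$-sample means sampling $Y$ with the same distribution and evaluating $f(\Psi+\delta_Y)$, so Fubini gives
\begin{align*}
\E{\tilde a_{M+1}-\tilde a_M}\,=\,\frac{1}{(2D)^d}\int_{Q_i^D}\int_\AA\int_\MM\E{D_{(x,a,u)}f(\Psi)}\,\Q(\d u)\,\Theta(\d a)\,\d x.
\end{align*}
Plugging this back, the prefactor $\lambda e^\lambda/(2D)^d$ collapses to $\beta e^{\beta(2D)^d}$, and combined with Step~1 this gives the stated bound.

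\textbf{Main obstacle.} The realization-wise map $\eta\mapsto F(\eta)$ is \emph{not} monotone under arbitrary additions of points, since the lexicographic coordinate machinery reindexes which entries of the $\MM$-marks are read by already-present simplices and can therefore flip their memberships in either direction. This disables the standard pivotal/BK-type reasoning available, e.g., for the Boolean model. The plan circumvents the issue by comparing $\Psi$ and $\tilde{\Psi}_i$ only to $\Psi_i^c$, where no reindexing occurs and true pointwise monotonicity is available. A second subtlety is that for a fixed realization of $\Psi_i^c$ the sequence $\tilde a_n(\Psi_i^c)$ need not itself be monotone in $n$, so the nonnegativity of the increments must be used only after taking the outer expectation over $\Psi_i^c$; this is the natural source of the exponential factor $e^{\beta(2D)^d}$ and is the technical heart of the proof.
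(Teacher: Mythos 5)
Your strategy works and yields the correct bound, and it takes a route that is genuinely a little different from the paper's. Where the paper copes with the reindexing problem by enlarging the base space to $\R^d\times\AA\times\N_0$, introducing the custom order $\tilde{\prec}$ (so that points added with increasing $\N_0$-labels never shift earlier coordinates) and thereby obtaining a \emph{realization-wise} increasing chain $\Delta_0\subseteq\Delta_1\subseteq\cdots$ to which the Hirsch-type hitting-time $N=\inf\{k:\Delta_k\in B_r\}$ and the Poisson tail bound are applied, you stay in the original construction and instead exploit that the \emph{law} of $\Delta$ is insensitive to the ordering: after averaging out the $\MM$-marks, each potential simplex is included independently with its $\varphi$-probability, so adding a point stochastically enlarges the complex and $\E[\tilde a_{k+1}]\geq\E[\tilde a_k]$. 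Your Abel summation with $\P(N>k)\leq\lambda^{k+1}/k!=\lambda e^\lambda\,\P(M=k)$ then plays exactly the role of the paper's step $\P(0<N\leq\vartheta)\leq\lambda e^\lambda\,\P(N=\vartheta+1)$. This trades the auxiliary labelled space for a distributional-monotonicity argument, which is a legitimate simplification (and is really the same fact the paper's $\tilde{\prec}$-construction is engineered to deliver).

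There is, however, a concrete gap in Step 1 for $i=0$. The paper's block $\Psi_0$ is $\Psi_{Q_0^D}+\delta_{(\0,V,U)}$, so $\tilde\Psi_0$ carries the origin with \emph{fresh} marks $(\tilde V,\tilde U)$, and moreover the origin lies in $Q_0^D$ itself. Consequently the sentence "adding any configuration in $Q_i^D$ leaves every lexicographic coordinate of a vertex of $\Delta(\Psi_i^c+\delta_{(\0,V,U)})$ unchanged" fails for $i=0$: the origin $(\0,V,U)$ is a vertex of that complex, sits inside $Q_0^D$, and adding $\Psi_{Q_0^D}$ (or its independent copy) can shift its coordinate and hence reindex every $\MM$-entry read for simplices containing it; and in $\tilde\Psi_0$ the origin's own $\MM$-mark is replaced, so the claimed deterministic containment in $\Delta(\tilde\Psi_0)$ does not hold either. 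The repair is the one the paper makes: $\Psi_0^c+\delta_{(\0,V,U)}$ has no point of $\Phi$ in $Q_0^D$ other than $\0$, so by \ref{V2}, $\P$-a.s.\ $\0$ is not part of any $q$-simplex of diameter at most $D$ (for $q\geq 1$), respectively $[\0]$ is isolated in $G_0$ (for $q=0$); hence $F(\Psi_0^c)=0$ a.s.\ and the bound $\zeta_0\leq 2\,\E[F(\Psi_{W^{(r)}})-F(\Psi_0^c)]=2\,\E[f(\Psi)-f(\Psi_0^c)]$ holds trivially, after which your Steps 2 and 3 go through unchanged. You should supply this $i=0$ case distinction explicitly; for $i\neq 0$ your deterministic containment argument is correct.
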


\begin{proof}
First, we define
\begin{align*}
\hat{\Psi} \,&:=\, \sum_{j \in I_r} \Psi_j \,=\, \Psi + \delta_{(\0,V,U)}, \qquad \bar{\Psi}_i \,:=\, \sum_{j \in I_r \setminus \{i\}} \Psi_j, \\
\tilde{\Psi}_i \,&:=\, \bar{\Psi}_i + \sum_{l=1}^\vartheta \delta_{(Y_l,V_l,U_l)} + \1\{ i=0 \} \delta_{(\0,\tilde{V},\tilde{U})}
\end{align*}
with $\vartheta \sim \mathrm{Po}(\beta(2D)^d)$, uniformly distributed random points $Y_1,Y_2,\dots$ in $Q_i^D$, and marks $\Theta \sim \tilde{V},V_1,V_2,\dots$ and $\Q \sim \tilde{U},U_1,U_2,\dots$, where all these objects are independent (in particular also from $\Psi,V,U$).
Then, we have
\begin{align}
\zeta_i \,&=\, \P\Big( \hat{\Psi}\in B_r, \,\tilde{\Psi}_i\notin B_r \text{ or } \hat{\Psi}\notin B_r, \,\tilde{\Psi}_i\in B_r \Big) \nonumber \\
&=\, 2\, \P\Big( \hat{\Psi}\notin B_r, \tilde{\Psi}_i\in B_r \Big) \nonumber \\
&\leq\, 2\, \P\Big( \bar{\Psi}_i\notin B_r, \,\tilde{\Psi}_i\in B_r \Big). \label{Influence}
\end{align}
For the inequality, note that $\P$-almost surely, the simplicial complex constructed from $\bar{\Psi}_i$ is contained in the simplicial complex constructed from $\hat{\Psi}$, and therefore $\hat{\Psi}\in B_r$ implies $\bar{\Psi}_i\in B_r$.
For the further proof, we need a new construction.
We extend our basic space $\R^d \times \AA$ to $\R^d \times \AA \times \N_0$ and choose an arbitrary probability measure $\mathbb{S}$ on $\N_0$.
On this space, we define the decomposition $\mathcal{R} = (Q_i^D \times \AA \times \N_0)_{i \in \N_0}$ and the relation
\begin{align*}
(x,a,n) \;\,\tilde{\prec}\;\, (y,b,m) \quad :\Longleftrightarrow \quad
\begin{cases}
n<m, & x,y \in Q_i^D, \\
x < y, & \text{otherwise}, \\
\end{cases}
\end{align*}
\noindent where $<$ again denotes the lexicographical order on $\R^d$.
Furthermore, let $\bar{\xi}_i$ be a point process on $\R^d \times \AA \times \MM\times \N_0$, derived from $\bar{\Psi}_i$ by assigning to each point independently of all others a random element in $\N_0$ with distribution $\mathbb{S}$.
We consider the complex $\Delta_k$, $k\in\N_0$, constructed from
\begin{align*}
\bar{\xi}_i + \1\{i=0\}\delta_{(\0,V,U,0)} + \sum_{j=1}^k \delta_{(Y_j,V_j,U_j,j)}
\end{align*}
where the vertices of $\Delta_k$ are elements of $\R^d \times \AA$, i.e., the additional marks in $\N_0$ are forgotten after the construction of the complex.
Then, $\Delta_\vartheta$ has the same distribution as the complex constructed from $\tilde{\Psi}_i$, and together with (\ref{Influence}), it follows
\begin{align} \label{TransitionConstruction}
\frac{1}{2}\zeta_i \,\leq\, \P\left( \Delta_0\notin B_r, \Delta_\vartheta\in B_r \right).
\end{align}
For $i \neq 0$, $\Delta_0$ arises from $\Delta_k$ by restriction to $W \setminus Q_i^D$.
In the case $i = 0$, however, we have
\begin{align*}
\P\Big( \bar{\Psi}_i\notin B_r, \,\tilde{\Psi}_i\in B_r \Big) \,=\, \P\Big( \tilde{\Psi}_i\in B_r \Big) \,=\, \P\Big( \Delta_\vartheta\in B_r \Big) \,=\, \P\left( \Delta_0\notin B_r, \Delta_\vartheta\in B_r \right),
\end{align*}
since the origin in $\Delta_0$ for $q>0$ is only part of simplices with diameter larger than $D$, and in the case $q=0$, due to \ref{V2}, $\P$-almost surely forms an isolated corner.
By construction, we have $\Delta_k \subseteq \Delta_l$ for all $k \leq l$.
With the definition
\begin{align*}
N &:=\, \inf \big\{ k \in \N_0 \,\|\, \Delta_k \in B_r \big\} \in \N_0 \cup \{\infty\}
\end{align*}
($\inf\emptyset := \infty$), we obtain from (\ref{TransitionConstruction})
\begin{align*}
\frac{1}{2}\zeta_i \,&\leq\, \P( 0<N\leq\vartheta ) \\
&\leq\, \beta(2D)^d e^{\beta(2D)^d} \;\P\big( N=\vartheta+1 \big),
\end{align*}
where the second inequality follows as in \cite{Hirsch} (proof of Lemma 6).
No properties of the distribution of $N$ are used here, only the independence of $N$ and $\vartheta$.
Finally, we have
\begin{align*}
\P(N=\vartheta+1) \,&=\, \P(\Delta_\vartheta\notin B_r, \Delta_{\vartheta+1}\in B_r) \\
&=\, \E{ \1\{ \Delta_{\vartheta+1}\in B_r \} - \1\{ \Delta_\vartheta\in B_r \} } \\
&=\, \E{ \1\big\{ \hat{\Psi}+\delta_{(Y_1,V_1,U_1)} \in B_r \big\} - \1\big\{ \hat{\Psi}\in B_r \big\} } \\
&=\, \frac{1}{(2D)^d} \int_{Q_i^D}\int_\AA\int_\MM \E{ D_{(x,a,u)} f(\Psi) } \; \Q(\d u) \; \Theta(\d a) \; \d x.
\end{align*}
\end{proof}

\noindent Using the two Lemmas \ref{Lem:SimWkeit} and \ref{Lem:Einfluss}, we can now prove the sharp phase transition for the percolation function $\theta$.

\begin{Th} [Sharp Phase Transition] \label{Th:scharferPhasenübergang}
The connection functions $\varphi_1,\dots,\varphi_{q+1}$ satisfy the two properties \ref{V1}, \ref{V2}.
Then, $\beta_c^{(q)} \in (0, \infty)$, and the following two statements hold.
\begin{enumerate}
\item For all $\beta < \beta_c^{(q)}$, there exists a $c(\beta) > 0$ such that
\begin{align*}
\theta_r(\beta) \,\leq\, e^{-c(\beta)r} \qquad \text{for all } r>0.
\end{align*}
\item For all $\beta_0 > \beta_c^{(q)}$, there exists a $c(\beta_0) > 0$ such that
\begin{align*}
\theta_\infty(\beta) \,\geq\, c(\beta_0)(\beta-\beta_c^{(q)}) \qquad \text{for all } \beta \in (\beta_c^{(q)}, \beta_0).
\end{align*}
\end{enumerate}
\end{Th}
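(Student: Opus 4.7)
The plan is to combine the OSSS inequality \eqref{OSSS} with the bounds from Lemmas \ref{Lem:SimWkeit} and \ref{Lem:Einfluss} into a differential inequality for $\theta_r$, and then apply the Duminil-Copin--Raoufi--Tassion dichotomy as in \cite{Duminil-Copin} and \cite{Hirsch}.

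First, since \eqref{OSSS} holds for every $s\in(0,r)$, I would integrate both sides over $s\in(0,r)$. Because the right-hand side of Lemma \ref{Lem:SimWkeit} does not depend on $i$, this yields
$$r\,\theta_r(\beta)(1-\theta_r(\beta))\;\leq\;\bigl(2D(1+\sqrt{d}) + 2\beta(4D)^d\,\Sigma_r(\beta)\bigr)\sum_{i\in I_r}\zeta_i,\quad \Sigma_r(\beta):=\int_0^r\theta_s(\beta)\,\d{s}.$$
Next, summing the bound from Lemma \ref{Lem:Einfluss} over $i\in I_r$ assembles the local integrals into an integral over $W^{(r)}\times\AA\times\MM$. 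Since the function $f$ there depends on $\Psi$ only through its restriction $\Psi_{W^{(r)}}$, the difference operator $D_{(x,a,u)}f(\Psi)$ vanishes for $x\notin W^{(r)}$; extending the integration domain to $\R^d\times\AA\times\MM$ and invoking the Margulis--Russo formula for Poisson processes identifies the resulting integral with $\tfrac{d\theta_r}{d\beta}(\beta)$. Together these produce the differential inequality
$$\theta_r(\beta)(1-\theta_r(\beta))\;\leq\;\frac{C(\beta_0)\bigl(1+\Sigma_r(\beta)\bigr)}{r}\,\frac{d\theta_r}{d\beta}(\beta), \qquad \beta\in(0,\beta_0],$$
with $C(\beta_0)>0$ depending only on $\beta_0,D,d$.

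With this inequality in hand, I would follow the DCRT strategy. Introduce the auxiliary critical intensity $\tilde{\beta}_c:=\sup\{\beta>0:\Sigma_\infty(\beta)<\infty\}$. For $\beta<\tilde{\beta}_c$ the bound $\Sigma_r(\beta)\leq\Sigma_\infty(\beta)<\infty$ is uniform in $r$, so integrating $\tfrac{d}{d\beta}\log\theta_r(\beta)\geq c\,r$ over a small subinterval of $(\beta,\tilde{\beta}_c)$ on which $\theta_r$ is bounded away from $1$ gives the exponential decay $\theta_r(\beta)\leq e^{-c(\beta)r}$, establishing (i). For $\beta>\tilde{\beta}_c$, the divergence of $\Sigma_\infty$ combined with $\theta_\infty(\beta)=\lim_{r\to\infty}\theta_r(\beta)$ allows one to discard the $1/r$ factor; integrating the resulting inequality $\tfrac{d\theta_\infty}{d\beta}\geq c(\beta_0)$ over $(\tilde{\beta}_c,\beta_0)$ yields the linear mean-field lower bound $\theta_\infty(\beta_0)\geq c(\beta_0)(\beta_0-\tilde{\beta}_c)$, establishing (ii). A standard comparison then gives $\tilde{\beta}_c=\beta_c^{(q)}$: (i) forces $\tilde{\beta}_c\leq\beta_c^{(q)}$ (exponential decay implies $\theta_\infty=0$), while (ii) forces $\beta_c^{(q)}\leq\tilde{\beta}_c$ (the linear lower bound gives $\theta_\infty>0$ above $\tilde{\beta}_c$); positivity and finiteness of $\beta_c^{(q)}$ are already provided by Theorem \ref{Th:kritischeIntensität}.

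The main technical obstacle is the careful execution of the integration step in the DCRT dichotomy: in particular, handling the transitional regime where $\theta_r$ is not yet small (so $1-\theta_r$ cannot be bounded below by a simple constant), and tracking the $\beta$-dependence of the constants to extract the linear factor $(\beta-\beta_c^{(q)})$ in (ii). A secondary subtlety is the justification of the Margulis--Russo differentiation step, which relies crucially on $f$ depending on $\Psi$ only through its restriction to the bounded region $W^{(r)}$ -- a property guaranteed by the choice $t=D$ in the construction of $\Delta$ together with property \ref{V2} of the connection functions.
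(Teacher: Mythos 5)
Your overall skeleton matches the paper's: integrate the OSSS inequality \eqref{OSSS} over $s$, convert the influence sum $\sum_i\zeta_i$ into $\tfrac{\d\,}{}\theta_r/\d\beta$ via Lemma \ref{Lem:Einfluss} and the Margulis--Russo formula, combine with Lemma \ref{Lem:SimWkeit} to obtain the differential inequality \eqref{DiffIneq}, and then invoke the Duminil-Copin--Raoufi--Tassion dichotomy. Your treatment of part (i) is essentially correct: for $\beta<\tilde{\beta}_c$ one picks $\beta'\in(\beta,\tilde{\beta}_c)$ with $\Sigma_\infty(\beta')<\infty$ and integrates the inequality $\tfrac{\d\,}{\d\beta}\log\theta_r\geq c\,r/\Sigma_\infty(\beta')$ over $(\beta,\beta')$ to get exponential decay. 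However, there are a few genuine gaps.

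The most serious one is the argument for part (ii). The statement that the divergence of $\Sigma_\infty$ ``allows one to discard the $1/r$ factor'' and then integrate $\tfrac{\d\theta_\infty}{\d\beta}\geq c(\beta_0)$ does not correspond to any valid manipulation of \eqref{DiffIneq}. If you bound $\int_0^r\theta_s\,\d s\leq r$ to obtain $\tfrac{\d\,}{\d\beta}\log\theta_r\geq c$, integrating from $\beta_c^{(q)}$ upward gives $\theta_r(\beta)\geq\theta_r(\beta_c^{(q)})e^{c(\beta-\beta_c^{(q)})}$, which is vacuous in the limit $r\to\infty$ since $\theta_\infty(\beta_c^{(q)})=0$. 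The DCRT argument that the paper defers to \cite{Hirsch} is genuinely more delicate: it uses the auxiliary function $T_n(\beta)=\tfrac{1}{\log n}\sum_{k=\lceil D\rceil}^n\theta_k(\beta)/k$ together with the \emph{limsup} characterization $\tilde{\beta}=\sup\{\beta:\limsup_r\log(\int_0^r\theta_s\,\d s)/\log r<1\}$, not your finiteness characterization, precisely to make the supercritical integration work. Your $\tilde{\beta}_c$ and the paper's $\tilde{\beta}$ coincide a posteriori, but you cannot run the supercritical step with the finiteness definition alone.

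Two smaller points. First, you remark that ``$1-\theta_r$ cannot be bounded below by a simple constant'' in the transitional regime; in fact the opposite is true and is an essential ingredient: property \ref{V2} forces $\Phi(B(\0,D))\geq q+1$ whenever $B_r$ occurs (for $r\geq D$), which yields $1-\theta_r(\beta)\geq\P_{\beta_2}(\Phi(B(\0,D))\leq q)=:C_1>0$ uniformly in $r\geq D$ and $\beta\leq\beta_2$, and a companion lower bound $\int_0^r\theta_s\,\d s\geq C_2\delta/4$ (from \ref{V1}) is needed to absorb the additive constant $D(1+\sqrt d)$ in \eqref{DiffIneq}. Second, Theorem \ref{Th:kritischeIntensität} does not apply verbatim: its hypotheses require \ref{V1} for $q=\alpha-1$ and \ref{V2} for $q=0$, whereas here we only assume \ref{V1},\ref{V2} for the one fixed $q$. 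The paper bridges this by replacing $\varphi_1$ with the cut-off version $\tilde\varphi_1$ from \eqref{Modif:Kantenfkt}, which leaves $\beta_c^{(q)}$ unchanged (by \ref{V2}) but makes \ref{V2} hold for $q'=0$ and \ref{V1} hold for $q'=q$, so that Theorem \ref{Th:kritischeIntensität} applies with $\alpha$ replaced by $q+1$.
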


\begin{proof}
To see that $\beta_c^{(q)} \in (0, \infty)$, we replace the edge function $\varphi_1$ by the modified function $\tilde{\varphi}$ from (\ref{Modif:Kantenfkt}) and consider the connection functions $\tilde{\varphi}_1,\varphi_2,\dots,\varphi_{q+1}$.
Due to \ref{V2}, $q$-simplices with diameter greater than $D$ in $G_q(\Delta^\0)$ almost surely form only isolated vertices and thus have no effect on the event $C_q^\infty$.
Thus, the critical intensity $\beta_c^{(q)}$ remains unchanged by this modification.
However, with this edge function $\tilde{\varphi}_1$ condition \ref{V1} is satisfied for $q=0$, and Theorem \ref{Th:kritischeIntensität} therefore gives $\beta_c^{(q)} \in (0, \infty)$. \\
The two statements (i) and (ii) follow from the Lemmas \ref{Lem:SimWkeit} and \ref{Lem:Einfluss} in a similar fashion as Theorem 1 follows from Lemmas 5 and 6 in \cite{Hirsch}, where we roughly outline the main steps (especially those that use the properties of the percolation model).
First, Lemma \ref{Lem:Einfluss} and the Margulis-Russo formula for Poisson processes (Theorem 19.4 in \cite{Last.Lectures}) yield
\begin{align}
\sum_{i\in I_r} \zeta_i \,&\leq\, 2\beta e^{\beta(2D)^d} \int_{W^{(r)}}\int_\AA\int_\MM \E{ D_{(x,a,u)}f(\Psi) } \; \Q (\d u) \; \Theta (\d a) \; \d x \nonumber \\
&=\, 2\beta e^{\beta(2D)^d} \frac{\mathrm{d}}{\mathrm{d} \beta} \mathbb{E}_\beta \left[ f(\Psi) \right] \nonumber \\
&=\, 2\beta e^{\beta(2D)^d}\frac{\mathrm{d}}{\mathrm{d} \beta} \theta_r(\beta). \label{Abschätzung_Einfluss}
\end{align}
Integrating both sides of the OSSS inequality (\ref{OSSS}) over $s \in [0, r]$, we obtain with Lemma \ref{Lem:SimWkeit} and (\ref{Abschätzung_Einfluss})
\begin{align*}
r\theta_r(\beta)(1-\theta_r(\beta)) \,&\leq\, 4\beta e^{\beta(2D)^d} \left(D(1+\sqrt{d}) + \beta(4D)^d \int_0^r \theta_s(\beta) \;\d s \right) \, \frac{\mathrm{d}}{\mathrm{d}\beta} \theta_r(\beta),
\end{align*}
and thus the differential inequality
\begin{align} \label{DiffIneq}
\frac{\mathrm{d}}{\mathrm{d}\beta} \theta_r(\beta) \,\geq\, \frac{r\theta_r(\beta)(1-\theta_r(\beta))}{4\beta e^{(2D)^d\beta} \left(D(1+\sqrt{d}) + \beta(4D)^d \int_0^r \theta_s(\beta) \;\d s \right)}.
\end{align}
We fix $0<\beta_1<\beta_2<\infty$ and choose a $\beta \in (\beta_1, \beta_2)$.
For $r \geq D$, the occurrence of $B_r$ almost surely implies the existence of a $(q+1)$-simplex $\sigma \in F_{q+1}(\Delta^\0)$ with $\0 \in \sigma$, which is involved in at least one edge in $G_q(\Delta^\0)$.
Thus, the origin must almost surely be contained in a $(q+1)$-simplex for the occurrence of $B_r$, which, due to \ref{V2}, implies almost surely $\Phi(B(\0, D)) \geq q+1$.
Therefore, for $r \geq D$, we obtain
\begin{align*}
1-\theta_r(\beta) \,\geq\, \P_\beta\big( \Phi(B(\0,D))\leq q \big) \,\geq\, \P_{\beta_2}\big( \Phi(B(\0,D))\leq q \big) \,=:\, C_1 \,>\, 0.
\end{align*}
On the other hand, for $r \leq \frac{\delta}{4}$, the existence of a $(q+1)$-simplex $\sigma \in \Delta^\0$ with $\0 \in \sigma \subset B(\0, \frac{\delta}{2})$ and $\sigma \cap B(\0, \frac{\delta}{4})^c \neq \emptyset$ implies the occurrence of $B_r$.
Therefore, by property \ref{V1} of the connection functions, we have
\begin{align*}
\theta_r(\beta) \,&\geq\, \varepsilon \;\P_\beta\big( \Psi\big(B(\0,\tfrac{\delta}{4})\times A \big)\geq q, \, \Psi\big( (B(\0,\tfrac{\delta}{2})\setminus B(\0,\tfrac{\delta}{4})) \times A \big)\geq 1 \big) \\
&\geq\, \varepsilon \;\P_{\beta_1}\big( \Psi\big(B(\0,\tfrac{\delta}{4})\times A \big)\geq q, \, \Psi\big( (B(\0,\tfrac{\delta}{2})\setminus B(\0,\tfrac{\delta}{4})) \times A \big)\geq 1 \big) \\
&\geq\, \varepsilon \;\P_{\beta_1}\big( \Psi\big(B(\0,\tfrac{\delta}{4})\times A \big)= q, \, \Psi\big( (B(\0,\tfrac{\delta}{2})\setminus B(\0,\tfrac{\delta}{4})) \times A \big)= 1 \big) \\
&\geq\, \varepsilon\,\Theta(A)^{q+1}\; \P_{\beta_1}\big( \Phi\big( B(\0,\tfrac{\delta}{4} \big) = q, \, \Phi\big( B(\0,\tfrac{\delta}{2})\setminus B(\0,\tfrac{\delta}{4}) \big)= 1 \big) \,=:\, C_2 \,>\, 0.
\end{align*}
Therefore, for all $r \geq \frac{\delta}{4}$, we have
\begin{align*}
\int_{0}^r \theta_s(\beta) \;\d s \,\geq\, C_2\frac{\delta}{4}.
\end{align*}
Since the two lower bounds $C_1, C_2$ do not depend on $r \geq D$ and $\beta \in (\beta_1, \beta_2)$, the differential inequality (\ref{DiffIneq}) implies that for all $0 < \beta_1 < \beta_2 < \infty$, there exists a constant $c > 0$ such that for all $\beta \in (\beta_1, \beta_2)$ and $r \geq D$, the inequality
\begin{align*}
\frac{\mathrm{d}}{\mathrm{d}\beta} \log(\theta_r(\beta)) \,\geq\, c \frac{r}{\int_0^r \theta_s(\beta) \;\d s}
\end{align*}
holds.
This statement serves as the analogue of Lemma 4 in \cite{Hirsch}.
We define
\begin{align*}
\tilde{\beta} \,:=\, \sup \bigg\{ \beta > 0 \;\bigg|\; \limsup_{r \rightarrow \infty} \frac{\log\left(\int_0^r \theta_s(\beta)\;\d s\right)}{\log(r)} < 1 \bigg\} \in [0, \infty]
\end{align*}
with $\sup \emptyset := 0$.
From $\beta > \beta_c^{(q)}$ it follows that $\theta(\beta) > 0$ and therefore
\begin{align*}
\frac{\log\left(\int_0^r \theta_s(\beta)\;\d s\right)}{\log(r)} \,\geq\, \frac{\log(\theta(\beta)) + \log(r)}{\log(r)} \,\rightarrow\, 1 \enskip \text{as} \enskip r \rightarrow \infty
\end{align*}
and thus in particular $\tilde{\beta} \leq \beta_c^{(q)} < \infty$.
In what follows, we denote the percolation function by $\theta^{(q)}$ and define $\theta^{(l)}$ for $l \in \{0, \dots, q-1\}$ analogously (although $\kappa_l$ may not satisfy property \ref{V2}).
It is straightforward to conform that $\theta^{(l)} \leq \theta^{(k)}$ holds for $k \leq l$.
To verify $\tilde{\beta} > 0$, we once again replace the edge function $\varphi_1$ with the modification $\tilde{\varphi}_1$ from (\ref{Modif:Kantenfkt}) and denote the percolation functions related to the connection functions $\tilde{\varphi}_1, \varphi_2, \dots, \varphi_{q+1}$ by $\tilde{\theta}^{(0)}, \dots, \tilde{\theta}^{(q)}$.
Since the event $B_r$ depends only on simplices with diameter at most $D$ are relevant, it follows that $\tilde{\theta}^{(q)} = \theta^{(q)}$ and thus in particular $\theta^{(q)} \leq \tilde{\theta}^{(0)}$.
Note that in the case $q = 0$, due to property \ref{V2} (for the function $\kappa_{q+1} = \kappa_1 = \varphi_1$), the functions $\varphi_1, \tilde{\varphi}_1$ agree almost everywhere with respect to $(\lambda_d \otimes \Theta)^2$.
Clearly, the functional estimate $\tilde{\varphi}_1 \leq \hat{\varphi}$ holds true for
\begin{align*}
\hat{\varphi}\big( (x, a), (y, b) \big) \,:=\, \1\big\{ \Vert x - y \Vert \leq D \big\}.
\end{align*}
We consider the random graph with the edge function $\hat{\varphi}$, i.e., the geometric graph with parameter $D$, and denote the associated percolation function by $\hat{\theta}^{(0)}$.
Theorem \ref{Th:kritischeIntensität} gives $\hat{\beta}_c^{(0)} \in (0, \infty)$ for the corresponding critical intensity $\hat{\beta}_c^{(0)}$.
It is known (see \cite{Penrose.Graphs} or Theorem 1.1 in \cite{Ziesche} for a more general result) that the percolation function $\hat{\theta}^{(0)}$ decays exponentially in the radius for $\beta < \hat{\beta}_c^{(0)}$.
Since $\tilde{\varphi}_1 \leq \hat{\varphi}$, this also holds for $\tilde{\theta}^{(0)}$ and since $\theta^{(q)} \leq \tilde{\theta}^{(0)}$, it follows in particular for $\theta^{(q)}$.
This in turn implies that $\tilde{\beta} \geq \hat{\beta}_c^{(0)} > 0$.
The remaining proof, which includes the proof of $\tilde{\beta} = \beta_c^{(q)}$, proceeds in the same manner as the proof of Theorem 1 in \cite{Hirsch} and is purely analytical in nature.
No further properties of the percolation model are used beyond the already shown properties of the percolation function, which is why we refer entirely to \cite{Hirsch} for the remainder of the proof.
Note that the function $T_n(\beta)$ from \cite{Hirsch} must be defined here as
\begin{align*}
T_n(\beta) \,:=\, \frac{1}{\log(n)} \sum_{k=\lceil D \rceil}^n \frac{\theta_k(\beta)}{k}, \qquad n \geq \lceil D \rceil, \beta > 0,
\end{align*}
(in \cite{Hirsch}, the sum starts from $k = 1$) so that the estimates used there can be applied.
The proof works in exactly the same way with this modification, because in particular $\lim_{n \rightarrow \infty} T_n(\beta) = \theta_\infty(\beta)$.
\end{proof}

\noindent For the proof of Theorem \ref{Th:scharferPhasenübergang}, the boundedness of the connection functions in the sense of \ref{V2} is essential.
An example of a theorem like Theorem \ref{Th:scharferPhasenübergang} without a form of boundedness can be found in \cite{Duminil-Copin}.
There, condition (ii) from Theorem \ref{Th:scharferPhasenübergang} is proven for percolation in the Boolean model with balls and unbounded radius distribution, and it is noted that condition (i) in this situation generally does not hold.
As far as we know, even for $q = 0$ (i.e., for the RCM as a random graph), the sharp phase transition from Theorem \ref{Th:scharferPhasenübergang} has not yet been found in the literature.

\section{Example models} \label{Sec:ExampleModels}

In this section, we want to introduce some example models that satisfy properties \ref{V1} and \ref{V2} for every choice of $q < \alpha$.
To begin with, we consider the Boolean model.

\begin{Bsp} \label{Bsp:BooleschesModell}
Let $\AA:=\mathcal{K}^d:=\{ K\subset\R^d\mid \emptyset\neq K \text{ is convex and compact}\}$ and choose the connection functions
\begin{align*}
    \varphi_j\big((x_0,K_0),\dots,(x_j,K_j)\big) \,=\, \1\Big\{ \bigcap_{i=0}^j (x_i+K_i)\neq\emptyset \Big\}, \qquad j\in\{1,\dots,\alpha\}.
\end{align*}
Assume that the mark distribution $\Theta$ satisfies the following two properties.
\begin{enumerate}
\item[(B1)] There exists an $r_0 > 0$ such that
\begin{align*}
\Theta\big( \{ K\in\mathcal{K}^d \;\|\; B(\0,r_0)\subseteq K  \} \big) \,>\, 0.
\end{align*}
\item[(B2)] There exists an $R > 0$ such that
\begin{align*}
\Theta\big( \{ K\in\mathcal{K}^d \;\|\; K\subseteq B(\0,R) \} \big) \,=\, 1.
\end{align*}
\end{enumerate}
We demonstrate that, in this situation, the conditions \ref{V1} and \ref{V2} are satisfied for all choices of $q<\alpha$.
On the one hand, with $A := \{ K\in\mathcal{K}^d \;\|\; B(\0,r_0)\subseteq K  \}$, it holds for all $K_0, \dots, K_{q+1} \in A$ and $x_0, \dots, x_{q+1} \in \R^d$ with $\diam(\{x_0, \dots, x_{q+1}\}) \leq r_0$
\begin{align*}
\kappa_{q+1}\big( (x_0,K_0), \dots, (x_{q+1},K_{q+1}) \big) \,&=\, \1\Big\{ \bigcap_{i=0}^{q+1} (x_i + K_i) \neq \emptyset \Big\} \\
&\geq\, \1\Big\{ \bigcap_{i=0}^{q+1} B(x_i, r_0) \neq \emptyset \Big\} \,=\, 1,
\end{align*}
which shows that \ref{V1} is satisfied with $\varepsilon = 1$ and $\delta = r_0$.
On the other hand, for all $K_0, \dots, K_{q+1} \in\mathcal{K}^d$ with  $K_i \subseteq B(\0,R)$, $i\in\{0,\dots,q+1\}$ and $x_0, \dots, x_{q+1} \in \R^d$ with $\diam(\{x_0, \dots, x_{q+1}\}) > 2R$, we have
\begin{align*}
\kappa_{q+1}\big( (x_0,K_0), \dots, (x_{q+1},K_{q+1}) \big) \,&\leq\, \1\Big\{ \bigcap_{i=0}^{q+1} B(x_i, R) \neq \emptyset \Big\} \,=\, 0,
\end{align*}
which shows that \ref{V2} is satisfied with the choice $D = 2R$.
\end{Bsp}

Note that $q$-percolation of this model is equivalent to percolation of the $(q+1)$-times covered set in the Boolean model.
In \cite{Last.OSSS} a sharp phase transition for the Boolean model with the two conditions (B1),(B2) is proven.
To present another example class of connection functions that satisfy the assumptions of Theorem \ref{Th:scharferPhasenübergang}, we consider the unmarked stationary model, which arises from the marked case by choosing $\AA$ as a singleton set.
To simplify notation, we completely omit the mark space $\AA$ in the following.

\begin{Bsp} \label{Bsp:Klasse_Verbindungsfkt}
Let $\phi_1,\dots,\phi_{q+1}:\R_{\geq 0}\rightarrow [0,1]$ be monotonically decreasing functions that do not vanish almost everywhere, and let $D>0$.
Define connection functions $\varphi_j:(\R^d)^{j+1}\rightarrow [0,1]$, $j\in\{1,\dots,q+1\}$, by
\begin{align*}
\varphi_j(x_0,\dots,x_j) \,&:=\, \phi_j\big( \diam(\{x_0,\dots,x_j\}) \big), \qquad j\in\{1,\dots,q\} \\
\varphi_{q+1}(x_0,\dots,x_{q+1}) \,&:=\, \phi_{q+1}\big( \diam(\{x_0,\dots,x_{q+1}\}) \big) \; \1\big\{ \diam(\{x_0,\dots,x_{q+1}\})\leq D \big\}.
\end{align*}
Then, for each $j\in\{1,\dots,q+1\}$, there exists a $b_j>0$ with $\phi_j(b_j)>0$ (with $b_{q+1}\leq D$).
Using the notation $b:=\min_{1\leq j\leq q+1} b_j>0$ and $c:=\min_{1\leq j\leq q+1}  \phi_j(b_j)>0$, it follows for all $x_0,\dots,x_{q+1}\in\R^d$ with $\diam(\{x_0,\dots,x_{q+1}\})\leq b$ that
\begin{align*}
\kappa_{q+1}(x_0,\dots,x_{q+1}) \,&=\, \prod_{\substack{I\subseteq \{0,\dots,q+1\}, \\ |I|\geq 2}} \varphi_{|I|-1}(x_I) \\
&\geq\, \prod_{\substack{I\subseteq \{0,\dots,q+1\}, \\ |I|\geq 2}} c \,=\, c^{(q+2)^2-(q+3)} \,>\,0.
\end{align*}
Here, $\varphi_{|I|-1}(x_I)$ denotes the function value of $\varphi_{|I|-1}$ when inserting the arguments $x_i$, $i\in I$.
Thus, the connection functions $\varphi_1,\dots,\varphi_{q+1}$ satisfy property \ref{V1}.
Property \ref{V2} is obviously fulfilled by definition of $\varphi_{q+1}$.
\end{Bsp}

As a generalization of Example \ref{Bsp:Klasse_Verbindungsfkt}, condition \ref{V1} is satisfied if there exists a monotonically decreasing function $\phi:\R_{\geq 0}\rightarrow [0,1]$, which does not vanish almost everywhere, such that
\begin{align*}
    \varphi_j\big( (x_0,a_0),\dots,(x_j,a_j)\big) \,\geq\, \phi\big(\diam(\{x_0,\dots,x_{j}\})\big), \qquad j\in\{1,\dots,q+1\}.
\end{align*}
In this case, $A=\AA$, $\delta=b$, and $\varepsilon=\phi(b)^{(q+2)^2-(q+3)}$ can be chosen, where $b>0$ is any number such that $\phi(b)>0$.
Finally, we would like to highlight the Vietoris-Rips and the \v Cech complex, which also satisfy the conditions of Theorem \ref{Th:scharferPhasenübergang}.

\begin{Bsp}
The Vietoris–Rips and the \v Cech complex satisfy properties \ref{V1} and \ref{V2}.
The former corresponds to the choice of connection functions
\begin{align*}
\varphi_1(x,y) \,:=\, \1\big\{ \Vert x-y\Vert \leq 2r \big\}, \qquad \varphi_j\equiv 1, \quad j\in\{2,\dots,q+1\},
\end{align*}
for some $r>0$.
Due to the definition of the edge function, we may assume without loss of generality that $\varphi_j(x_0,\dots,x_j):=\1\{ \diam(\{x_0,\dots,x_{j}\})\leq 2r \}$ without changing the model, making it immediately clear that this is a special case of Example \ref{Bsp:Klasse_Verbindungsfkt}.
On the other hand, the \v Cech complex with parameter $r>0$ corresponds to the choice of connection functions
\begin{align*}
\varphi_j(x_0,\dots,x_j) \,:=\, \1\Big\{ \cap_{i=0}^j B(x_i,r)\neq\emptyset \Big\}, \quad j\in\{1,\dots,q+1\},
\end{align*}
which makes it a special case of Example \ref{Bsp:BooleschesModell} with $\Theta=\delta_{B(\0,r)}$.
\end{Bsp}

\section*{Acknowledgements}

The results of this paper stem from the author's PhD thesis \cite{Pabst.Thesis}.
The author is grateful to Daniel Hug, the supervisor of the PhD thesis, for the support and insightful feedback throughout the research.
This work was partially supported by the Deutsche Forschungsgemeinschaft (DFG, German Research Foundation) through the SPP 2265, under grant numbers HU 1874/5-1 and ME 1361/16-1.

\bigskip
\bigskip
\bigskip
\bigskip
\bigskip

\bibliographystyle{abbrv}
\bibliography{RandomConnectionModel_final}

\end{document}